\newtheorem{theorem}{Theorem}[section]
\newtheorem{corollary}{Corollary}[theorem]
\newtheorem{lemma}{Lemma}[section]
\begin{document}
\begin{frontmatter}

\title{Existence, uniqueness and comparisons for BSDEs in~general spaces}
\runtitle{BSDEs in general spaces}

\begin{aug}
\author[A]{\fnms{Samuel N.} \snm{Cohen}\corref{}\thanksref{t1}\ead[label=e1]{samuel.cohen@maths.ox.ac.uk}\ead[label=u1,url]{http://people.maths.ox.ac.uk/cohens/}} and
\author[B]{\fnms{Robert J.} \snm{Elliott}\ead[label=e2]{relliott@ucalgary.ca}\ead[label=u2,url]{http://people.ucalgary.ca/\textasciitilde relliott/}}

\runauthor{S. N. Cohen and R. J. Elliott}
\affiliation{University of Adelaide, University of Adelaide and
University of Calgary}
\address[A]{Mathematical Institute\\
University of Oxford\\
OX1 3LB, Oxford\\
United Kingdom\\
\printead{e1}\\
\printead{u1}}
\address[B]{School of Mathematical Sciences\\
University of Adelaide\\
Adelaide, South Australia, 5005\\
\printead{e2}\\
\printead{u2}}
\end{aug}

\thankstext{t1}{Samuel Cohen is now at the University of Oxford;
this work was completed at the University of Adelaide.}

\received{\smonth{1} \syear{2010}}
\revised{\smonth{2} \syear{2011}}

%
\begin{abstract}
We present a theory of backward stochastic differential equations in
continuous time with an arbitrary filtered probability space. No
assumptions are made regarding the left continuity of the filtration,
of the predictable quadratic variations of martingales or of the
measure integrating the driver. We present conditions for existence and
uniqueness of square-integrable solutions, using Lipschitz continuity
of the driver. These conditions unite the requirements for existence in
continuous and discrete time and allow discrete processes to be
embedded with continuous ones. We also present conditions for
a~comparison theorem and hence construct time consistent nonlinear
expectations in these general spaces.
\end{abstract}

%
\begin{keyword}[class=AMS]
\kwd[Primary ]{60H20}
\kwd[; secondary ]{60H10}
\kwd{91B16}.
\end{keyword}
\begin{keyword}
\kwd{BSDE}
\kwd{comparison theorem}
\kwd{general filtration}
\kwd{separable probability space}
\kwd{Gr\"onwall inequality}
\kwd{nonlinear expectation}.
\end{keyword}

\pdfkeywords{60H20, 60H10, 91B16, BSDE,
comparison theorem, general filtration,
separable probability space,
Gronwall inequality,
nonlinear expectation}

\vspace*{12pt}
\end{frontmatter}

\section{Introduction}
The theory of backward stochastic differential equations (BSDEs) has
been extensively studied. Typically, results have been obtained only in
the context of a filtration generated by a Brownian motion, possibly
with the addition of Poisson jumps. Specifically, attention has been
given to equations of the form
\[
dY_t = F(\omega, t, Y_{t-}, Z_t)\,dt - Z_t^* \,dM_t, \qquad  Y_T = Q,
\]
where $M$ is the martingale generating the filtration (typically
Brownian motion), $T$ is a fixed finite terminal time, $Q\in
L^2(\mathcal{F}_T)$ is a stochastic terminal value, $F$ is a
progressively measurable function, $[\cdot]^*$ denotes matrix/vector
transposition (and hence $A^*B$ denotes the inner product of $A$ and
$B$) and the solution is a square integrable pair of processes $(Y,Z)$,
where $Y$ is adapted and $Z$ is predictable.

A notable exception to this is the work of El Karoui and Huang~\cite{El1997a}, where a general probability space is considered. In the case
considered in~\cite{El1997a}, the martingale $M$ is specified a priori,
and the equation considered is
%
\begin{equation}\label{eqBSDEworthogonal}
dY_t = F(\omega, t, Y_{t-}, Z_t)\,dC_t - Z_t^* \,dM_t- dN_t; \qquad  Y_T = Q,
\end{equation}
where each term is as above, the filtration is quasi-left continuous,
$C$ is a~continuous process such that $d\langle M\rangle$ is absolutely
continuous with respect to $dC$ and $N$ is a martingale strongly
orthogonal to $M$, that is, $\langle M, N\rangle=0$, where $\langle
\cdot, \cdot\rangle$ denotes the predictable quadratic covariation process.

These equations depend heavily on the continuity of $C$ and, therefore,
are unable to deal with any situation where martingales may jump at a
point with positive probability. However, these situations may arise in
various applications. For example, when using BSDEs in modeling
dividend paying assets, the martingales involved may jump at the time
of the dividend announcement. Similarly, if we consider embedding a
discrete time process in continuous time, we obtain processes which
jump with positive probability at every integer.

A significant use of these equations is to generate ``nonlinear
expectations'' or ``nonlinear evaluations,'' in the sense of~\cite{Peng2004}. These are operators
\[
\mathcal{E}(\cdot|\mathcal{F}_t)\dvtx L^2(\mathcal{F}_T)\to L^2(\mathcal{F}_t),
\]
satisfying certain basic properties. They have important applications
in mathematical finance and stochastic control. Given the results of
\cite{Coquet2002} and~\cite{Hu2008}, it is known that in the Brownian
setting, under certain conditions, these operators are completely
described by BSDEs. Furthermore, it is clear, given the comparison
theorem in~\cite{Cohen2009}, BSDEs of the form of (\ref
{eqBSDEworthogonal}) in arbitrary spaces, under some conditions, also
describe nonlinear expectations. However, it is not known how large a
class of nonlinear expectations in a general space is given by a BSDE.

To establish such a result for BSDEs of the form of (\ref
{eqBSDEworthogonal}), one faces a~significant problem. If $\mathcal
{E}(Q|\mathcal{F}_t)=Y_t$ is given as the solution to (\ref
{eqBSDEworthogonal}) for some~$F$ not dependent on $Y_{t-}$, once $M$
is fixed, for any martingale $N$ orthogonal to~$M$ with $N_0=0$, we have the property
\[
\mathcal{E}(Q+N_T|\mathcal{F}_t) = \mathcal{E}(Q|\mathcal{F}_t).
\]
This property is clearly not true for most nonlinear expectations,
whenever there are nontrivial examples of such processes $N$, which is
not the case in the Brownian setting (as a martingale representation
theorem holds). It follows that these equations cannot describe any
nonlinear expectations which do not possess this property.

Furthermore, the fact that the martingale $M$ must be specified a
priori is arguably unsatisfying. Conceptually, it may be preferable if,
in some sense, the probability space itself dictated what martingales
are needed for the BSDE.\vadjust{\goodbreak} In this case, one could proceed either by
specifying the probability space using a collection of martingales
(which, given a representation theorem holds, will then describe all
martingales in the space), or vice versa.

In this paper we establish such a general result. We show that there is
a sense in which the original BSDE can be interpreted in a general
space, using only a separability assumption on $L^2(\mathcal{F}_T)$. We
establish conditions on the existence and uniqueness of BSDEs in this
setting, where the driver is integrated with respect to an arbitrary
deterministic Stieltjes measure (Theorem~\ref{thmBSDEExist2}). We also
prove a comparison theorem for these solutions, which shows under which
conditions they do indeed describe nonlinear expectations and evaluations.

A similar approach is taken in~\cite{Hassani2002}, where a form of BSDE
is considered using generic maps from a space of semimartingales to the
spaces of square-integrable martingales and of finite-variation
processes integrable with respect to a given continuous increasing
process. Using Browder's theorem, they demonstrate the existence of
solutions to these equations on an infinite horizon. Our approach
differs from theirs by considering a classical form of BSDE on a finite
horizon and deriving an existence result using a contraction mapping
technique. Because of this, our conditions for existence are a more
straightforward extension of those in the classical case. More
significantly, our approach does not require the driver of the BSDE to
be integrated with respect to a continuous measure, which allows a
unification of the discrete and continuous time theory of BSDEs.

\section{Martingale representations}
The key result used in the construction of BSDEs is the Martingale
representation theorem. In the Brownian setting, this result is well
known (see, e.g.,~\cite{Revuz1999}, Chapter V.3, or
\cite{Elliott1982}, Theorem 12.33). In other cases, for example, when dealing with
martingales generated by Markov chains, a similar result is available
(see~\cite{Cohen2008}); however it is also known that there exist
probability spaces in which no finite-dimensional martingale
representation theorem exists.

Consider a probability space $(\Omega, \mathcal{F}, \mathbb{P})$ with a
filtration $\{\mathcal{F}_t\}, t\in[0,T]$, satisfying the usual
conditions of completeness and right continuity. The time-interval
$[0,T]$ is given the Borel $\sigma$-field $\mathcal{B}([0,T])$.
\begin{definition}\label{defninducedmeasure}
For any nondecreasing process of finite variation $\mu$, we define the
measure induced by $\mu$ to be the measure over $\Omega\times[0,T]$
given by
\[
A\mapsto E\biggl[\int_{[0,T]}I_A(\omega, t) \,d\mu\biggr].
\]
Here $A\in\mathcal{F}\otimes\mathcal{B}([0,T])$, and the integral is
taken pathwise in a Stieltjes sense.
\end{definition}
\begin{remark} \label{reminducedmeasurerem}
If $\mu$ is a deterministic process, then this definition gives the
product measure $\mu\times\mathbb{P}$. We can also consider these as
measures on the space $(\Omega\times[0,T], \mathcal{P})$, where
$\mathcal{P}$ is the predictable $\sigma$-algebra.\vadjust{\goodbreak}
\end{remark}

Under the assumption that the Hilbert space $L^2(\mathcal{F}_T)$ is
separable, a paper of Davis and Varaiya~\cite{Davis1974} gives the
following result (see also Malamud~\cite{Malamud2007}).\vspace*{-1pt}

\begin{theorem}[(Martingale representation theorem;~\cite{Davis1974})]
\label{thmmartrep}
Suppose that $L^2(\mathcal{F}_T)$ is a separable Hilbert space, with an
inner product $(X,Y) = E[XY]$. Then there exists a finite or countable
sequence of square-integrable $\{\mathcal{F}_t\}$-martingales $M^1,
M^2,\ldots$ such that every square integrable $\{\mathcal{F}_t\}
$-martingale~$N$ has a representation
\[
N_t= N_0+\sum_{i=1}^{\infty} \int_{]0,t]}Z^i_u \,dM^i_u
\]
for some sequence of predictable processes $Z^i$. This sequence satisfies
%
\begin{equation} \label{eqZbound}
E\Biggl[\sum_{i=0}^\infty\int_{[0,T]} (Z^i_u)^2 \,d\langle M^i\rangle
_u\Biggr] <+\infty.
\end{equation}

These martingales are orthogonal (i.e., $E[M^i_TM^j_T]=0$ for all
$i\neq j$), and the predictable quadratic variation processes $\langle
M^i\rangle$ satisfy
\[
\langle M^1\rangle\succ\langle M^2\rangle\succ\cdots,
\]
where $\succ$ denotes absolute continuity of the induced measures
(Definition~\ref{defninducedmeasure}). Furthermore, these martingales
are unique, in that if $N^i$ is another such sequence, then $\langle
N^i\rangle\sim\langle M^i\rangle$, where $\sim$ denotes equivalence
of the induced measures.\vspace*{-1pt}
\end{theorem}
\begin{corollary}
For any predictable processes $Z^i$ satisfying~(\ref{eqZbound}), the
process $\sum_i\int_{]0,t]} Z^i_u\,dM_u^i$ is well defined and is a
square-integrable martingale.\vspace*{-1pt}
\end{corollary}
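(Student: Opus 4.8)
The plan is to realise the infinite sum as a limit in the Hilbert space $\mathcal{H}^2$ of square-integrable martingales on $[0,T]$, equipped with the norm $\|X\|^2 = E[(X_T-X_0)^2]$, and to use the orthogonality of the $M^i$ to convert the summability hypothesis (\ref{eq:Zbound}) into a Cauchy condition. I would first recall from the standard $L^2$-theory of stochastic integration with respect to a square-integrable martingale that, for each fixed $i$, the finiteness of $E[\int_{[0,T]}(Z^i_u)^2 d\langle M^i\rangle_u]$ — which is implied by (\ref{eq:Zbound}) — guarantees that $X^{(i)}:=\int_{]0,\cdot]} Z^i_u dM^i_u$ is a well-defined element of $\mathcal{H}^2$ with $X^{(i)}_0=0$, and that the isometry $E[(X^{(i)}_T)^2] = E[\int_{[0,T]}(Z^i_u)^2 d\langle M^i\rangle_u]$ holds. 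Each partial sum $X^n := \sum_{i=1}^n X^{(i)}$ is then a square-integrable martingale.

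The crucial step is orthogonality. Since $\langle M^i, M^j\rangle = 0$ for $i\neq j$, the integrals inherit strong orthogonality, $\langle X^{(i)}, X^{(j)}\rangle = \int Z^i Z^j\, d\langle M^i, M^j\rangle = 0$, so that $X^{(i)}X^{(j)}$ is a martingale null at $0$ and hence $E[X^{(i)}_T X^{(j)}_T]=0$. Consequently the cross terms drop out, and for $m>n$,
\[\|X^m - X^n\|^2 = E\Big[\Big(\sum_{i=n+1}^m X^{(i)}_T\Big)^2\Big] = \sum_{i=n+1}^m E\Big[\int_{[0,T]}(Z^i_u)^2 d\langle M^i\rangle_u\Big].\]
By (\ref{eq:Zbound}) this tail vanishes as $n\to\infty$, so $(X^n)$ is Cauchy in $\mathcal{H}^2$.

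Finally, invoking completeness of $\mathcal{H}^2$, the sequence $X^n$ converges to a limit $X\in\mathcal{H}^2$, which is by definition a square-integrable martingale; this $X$ is the process $\sum_i \int_{]0,t]} Z^i_u dM^i_u$, and Doob's inequality upgrades the convergence to uniform convergence in $L^2$ along the paths, so the limit is genuinely a version of the claimed sum. Because the series is orthogonal with square-summable norms, its convergence is unconditional, which shows the sum is well-defined independently of the ordering of the $M^i$.

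I expect the orthogonality step to be the main subtlety: it relies on strong orthogonality $\langle M^i, M^j\rangle = 0$ of the representing family (as produced by the construction underlying Theorem \ref{thm:martrep}), rather than on mere orthogonality of the terminal values, since the latter alone would not force the cross terms of the stochastic integrals to vanish. Everything else is a routine application of the isometry property and the completeness of $\mathcal{H}^2$.
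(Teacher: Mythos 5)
Your proof is correct, and it is essentially the argument the paper has in mind: the paper states this corollary without an explicit proof, but its justification is exactly the isometry recorded later in Definition \ref{defn:Mnorm} (equation (\ref{eq:isometry})), which converts (\ref{eq:Zbound}) into a Cauchy condition for the partial sums in the complete space of square-integrable martingales, just as you do. You also correctly identify the one genuine subtlety: the argument needs strong orthogonality $\langle M^i, M^j\rangle=0$ (which the Davis--Varaiya construction provides), not merely the orthogonality of terminal values $E[M^i_T M^j_T]=0$ quoted in the statement of Theorem \ref{thm:martrep}, and this is precisely what the paper's parenthetical ``Note this requires the orthogonality of the $M^i$'' is implicitly invoking.
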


\begin{remark}
When a finite-dimensional martingale representation theorem holds, as
when the space is generated\vspace*{1pt} by a Brownian motion, then all but finitely
many of the martingales $M^i$ given by Theorem~\ref{thmmartrep} will
be zero. We shall not, in general, assume that this is the case, but
acknowledge that, in this situation, significant simplification of the
equations considered is possible.\vspace*{-1pt}
\end{remark}

We shall use this result to construct a form of BSDE on this general space.\vspace*{-1pt}

\begin{definition}
We denote\vspace*{1pt} by $\mathbb{R}^{K\times\infty}$ the space of infinite
$\mathbb{R}^K$-valued sequences. We note that the predictable processes
$Z^i$ in Theorem~\ref{thmmartrep} can be written as a vector process
$Z$, which takes values in $\mathbb{R}^{1\times\infty}$.\vspace*{-1pt}
\end{definition}

\section{BSDEs in general spaces: A definition}
We seek to construct BSDEs, assuming only the usual properties of the
filtration and that $L^2(\mathcal{F}_T)$ is a~separable Hilbert space.
For simplicity, we shall also assume that $\mathcal{F}_0$ is trivial,
which, by right continuity, ensures that, almost surely, no martingale
has a~jump at $t=0$.\vadjust{\goodbreak}
\begin{definition}
Let $\mu$ be a deterministic signed Stieltjes
measure. For \mbox{$K\in\mathbb{N}$}, a BSDE is an equation of the form
%
\begin{equation}\label{eqBSDE}
Q = Y_t - \int_{]t,T]} F(\omega, u, Y_{u-}, Z_u) \,d\mu_u + \sum
_{i=1}^\infty\int_{]t,T]}Z^i_u\,dM^i_u,
\end{equation}
where $Z_t(\omega)$ is the (countably infinite) vector with entries
$\{Z^i_t(\omega)\in\mathbb{R}^K\}_{i\in\mathbb{N}}$. For a~terminal value $Q\in
L^2(\mathbb{R}^K;\mathcal{F}_T)$, a predictable \textit{driver} function
$F\dvtx\Omega\times[0,T]\times\mathbb{R}^K\times
\mathbb{R}^{K\times\infty}\to\mathbb{R}^K$, a solution is a pair of processes
$(Y, Z)$ taking values in $\mathbb{R}^K\times\mathbb{R}^{K\times\infty
}$, where
$Z$ is predictable, and $Y$ is adapted. We shall restrict our attention
to the
case when $Y$ is square integrable, and $Z$
satisfies~(\ref{eqZbound}).\vspace*{-2pt}
\end{definition}
\begin{remark}
We note that this type of equation encompasses most previously studied
forms of
BSDEs. When the filtration is Brownian, we can take $M^i$ to be the $i$th
component of the generating Brownian motion, $\mu=t$, and the equation is
standard. When the filtration is generated by a Poisson random measure
over a
separable space and a Brownian motion, as in~\cite{Barles1997,Royer2006}
and others, or by a Markov chain, as in~\cite{Cohen2008,Cohen2008b}, we
have a similar reduction. When we consider the analogous equations in discrete
time, we can form the discrete-time filtration embedded in this
continuous time
context (see~\cite{Jacod2003}, Chapter 1f) and hence obtain the
backward stochastic
difference equations considered in~\cite{Cohen2008c} and~\cite{Cohen2009a}.

Comparing with the work of~\cite{El1997a}, we see that if $F$ depends
only on the projection of $Z$ into a finite-dimensional subspace of
$\mathbb{R}^{K\times\infty}$, then it is possible to reduce the
equation to a form similar to~(\ref{eqBSDEworthogonal}).\vspace*{-2pt}
\end{remark}

We shall present a result (Theorem~\ref{thmBSDEExist2}) demonstrating
conditions under which there exists a unique solution to such an equation.\vspace*{-2pt}

\section{Inequalities for Stieltjes integrals}
To give conditions under which solutions to a BSDE exist, we must first
establish the following results regarding integrals with respect to Stieltjes
measures. These results are standard whenever the measures are continuous.\vspace*{-2pt}

\subsection{Stieltjes exponentials}\vspace*{-2pt}
\begin{definition}\label{defexponentials}
For any c\`adl\`ag function of finite variation \mbox{$\nu\dvtx[0,\infty[\ \to
\mathbb{R}$}, we write
\[
\mathfrak{E}(\nu_t):=e^{\nu_t}\prod_{0\leq s\leq t}(1+\Delta\nu
_s)e^{-\Delta\nu_s},
\]
and call this the \textit{Stieltjes exponential} of $\nu$. Note that this
is also a c\`adl\`ag function.\vspace*{-2pt}
\end{definition}

Note that $\mathfrak{E}(\nu_t)$ should be more properly written as
$\mathfrak{E}(\nu_{(\cdot)};t)$, as it is a~function of $\{\nu_s;s\leq
t\}$ not just of $\nu_t$. We use the\vadjust{\goodbreak} former notation purely for
compactness, whenever this does not lead to confusion. We note the
following useful bound.\vspace*{-2pt}
\begin{lemma} \label{lemprocessexpbound}
If $\nu$ is a c\`adl\`ag function, then $\mathfrak{E}(\nu_t) \leq e^{\nu
_t}$, where $e^{\nu_t}$ is the classical exponential of $\nu_t$.\vspace*{-2pt}
\end{lemma}
\begin{pf}
As $e^x\geq1+x$, it is clear that $(1+\Delta\nu_t)e^{-\Delta\nu
_t}\leq1$ for all $t$. The result follows.\vspace*{-2pt}
\end{pf}
\begin{lemma}\label{lemexpproperties}
For any c\`adl\`ag function of finite variation, the Stieltjes~exponential is
well defined. Furthermore, if $\Delta\nu_t\!\geq\!-1$, then
$\mathfrak{E}(\nu_t)\!\geq\!0$. If $\Delta\nu_s \!>\!-1$, then $\mathfrak
{E}(\nu_t)\!>\!0$, and $\mathfrak{E}(\nu_t)^{-1}$ is well defined. In this
case, the process $u_t\!=\break u_s \mathfrak{E}(\nu_t)\mathfrak{E}(\nu
_s)^{-1}$ is the solution to the Lebesgue--Stieltjes integral equation,
\[
u_t = u_{s}+\int_{]s,t]} u_{r-} \,d\nu_r.\vspace*{-2pt}
\]
\end{lemma}
\begin{pf}
As the process $\nu_t$ is c\`adl\`ag and of finite variation, it is a
(deterministic) semimartingale. $\mathfrak{E}(\nu_t)$ is then the
standard Dol\'eans--Dade exponential of this process, and so its
existence and basic properties can be seen in~\cite{Elliott1982},
Theorem~13.5~ff. This guarantees the convergence of the infinite
products considered and solves the desired integral equation. The
nonnegativity result is clear by inspection.

For the positivity result, we need only show that $\prod_{0\leq s\leq
t}(1+\Delta\nu_s)>0$. By continuity of the logarithm, this is
equivalent to showing that
\[
-\sum_{0\leq s\leq t} \log(1+\Delta\nu_s)<\infty.
\]

We then note that we can consider three cases. First, if $\Delta\nu
_s\geq0$, then $-\log(1+\Delta\nu_s)\leq0$, and hence
\[
-\biggl(\sum_{\{0\leq s\leq t\}\cap\{\Delta\nu_s\geq0\}} \log(1+\Delta\nu_s)\biggr)
\leq0 <\infty.
\]
Second, we note that $\sum_{0<s\leq t}|\Delta\nu_s|$ is finite, as $\nu
$ is of finite variation, and hence there are only finitely many $s$
such that $\Delta\nu_s\leq-0.7$. Therefore
\[
-\biggl(\sum_{\{0\leq s\leq t\}\cap\{\Delta\nu_s \leq-0.7\}}\log(1+\Delta\nu
_s)\biggr) <\infty.
\]
Finally, we know that $2x<\log(1+x)<0$ for $-0.7<x<0$. Hence, we have
\begin{eqnarray*}
-\biggl(\sum_{\{0\leq s\leq t\}\cap\{-0.7<\Delta\nu_s <0\}}\log(1+\Delta
\nu_s)\biggr) &<& \biggl(\sum_{\{0\leq s\leq t\}\cap\{-0.7<\Delta\nu_s
<0\}}2|\Delta\nu_s|\biggr) \\
&<&\infty.
\end{eqnarray*}
Combining these three sums gives the desired constraint on the
logarithm, and hence the strict positivity of the desired
product.\vadjust{\goodbreak}
\end{pf}
\begin{lemma}\label{lemexpinfpositive}
For $\nu$ a c\`adl\`ag function of finite variation with $\Delta\nu
_t>-1$, we have the stronger result
\[
\inf_{0\leq t\leq T}\biggl\{ \prod_{0\leq s\leq t}(1+\Delta\nu_s)
\biggr\} >0.
\]
\end{lemma}
\begin{pf}
By the same argument as in Lemma~\ref{lemexpproperties}, we have
\[
-\biggl(\sum_{\{0\leq s\leq T\}\cap\{\Delta\nu_s <0\}} \log(1+\Delta\nu
_s)\biggr)<\infty.
\]
It follows that
\[
-\sum_{0\leq s\leq t} \log(1+\Delta\nu_s)<-\biggl(\sum_{\{0\leq s\leq T\}
\cap\{\Delta\nu_s <0\}} \log(1+\Delta\nu_s)\biggr)<\infty
\]
for all $t$. Hence
\[
\inf_{0\leq t\leq T} \biggl\{\prod_{0\leq s\leq t}(1+\Delta\nu_s)
\biggr\} >\biggl(\prod_{\{0\leq s\leq T\}\cap\{\Delta\nu_s<0\}}(1+\Delta\nu
_s)\biggr)>0.
\]
\upqed\end{pf}
\begin{definition}\label{defnjumpinversion}
Let $\nu$ be a c\`adl\`ag function of finite variation with $\Delta\nu
_t>-1$ for all $t$. Then the \textit{left-jump inversion} of $\nu$ is
defined by
\[
\bar\nu_t = \nu_t - \sum_{0\leq s\leq t} \frac{(\Delta\nu
_s)^2}{1+\Delta\nu_s}.
\]
Similarly if $\Delta\nu_t<1$ for all $t$, the \textit{right-jump
inversion} is defined by
\[
\tilde\nu_t = \nu_t + \sum_{0\leq s\leq t} \frac{(\Delta\nu
_s)^2}{1-\Delta\nu_s}.
\]
\end{definition}
\begin{lemma}
For $\nu$ a function as in Definition~\ref{defnjumpinversion}, the
left- and right-jump inversions are finite (whenever they are
defined), and satisfy
\[
\mathfrak{E}(\nu_t)^{-1} = \mathfrak{E}(-\bar\nu_t)
\]
and
\[
\mathfrak{E}(-\nu_t) = \mathfrak{E}(\tilde\nu_t)^{-1}.
\]
\end{lemma}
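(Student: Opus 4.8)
The plan is to separate the two assertions: once the correction sums are shown to be finite, both identities collapse to jump-by-jump computations against the definition of $\mathfrak{E}$, so the only genuine content is finiteness together with the justification that the resulting functions are of finite variation.

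First I would prove finiteness of $\sum_{0\le s\le t}\frac{(\Delta\nu_s)^2}{1+\Delta\nu_s}$, the argument for $\tilde\nu$ being symmetric. I would split the jumps at a threshold bounded away from $-1$, say $-\tfrac12$. For jumps with $\Delta\nu_s\ge-\tfrac12$ we have $1+\Delta\nu_s\ge\tfrac12$, so the summand is dominated by $2(\Delta\nu_s)^2$; and since $\nu$ has finite variation, only finitely many jumps exceed $1$ in modulus, so $\sum_{|\Delta\nu_s|\le 1}(\Delta\nu_s)^2\le\sum_{|\Delta\nu_s|\le1}|\Delta\nu_s|\le\sum_s|\Delta\nu_s|<\infty$, giving $\sum(\Delta\nu_s)^2<\infty$. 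For the remaining jumps with $\Delta\nu_s<-\tfrac12$, finite variation forces there to be only finitely many, each contributing a finite term because $1+\Delta\nu_s>0$ by hypothesis. Hence the sum is finite, $\bar\nu_t$ (and likewise $\tilde\nu_t$) is well defined and of finite variation as the difference of two finite-variation functions, so $\mathfrak{E}(-\bar\nu_t)$ and $\mathfrak{E}(\tilde\nu_t)$ are well defined by Lemma \ref{lem:expproperties}. Moreover $\mathfrak{E}(\nu_t)^{-1}$ makes sense since $\Delta\nu_s>-1$ gives $\mathfrak{E}(\nu_t)>0$ by the same lemma.

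For the first identity I would compute the jumps of $\bar\nu$ from its definition: $\Delta\bar\nu_s=\Delta\nu_s-\frac{(\Delta\nu_s)^2}{1+\Delta\nu_s}=\frac{\Delta\nu_s}{1+\Delta\nu_s}$, whence $1+\Delta(-\bar\nu_s)=1-\frac{\Delta\nu_s}{1+\Delta\nu_s}=(1+\Delta\nu_s)^{-1}$ (in particular this is positive, so $\Delta(-\bar\nu_s)>-1$ and $\mathfrak{E}(-\bar\nu_t)>0$). Substituting into $\mathfrak{E}(-\bar\nu_t)=e^{-\bar\nu_t}\prod_{s\le t}\bigl(1+\Delta(-\bar\nu_s)\bigr)e^{-\Delta(-\bar\nu_s)}$, writing $e^{-\bar\nu_t}=e^{-\nu_t}\prod_{s\le t}e^{(\Delta\nu_s)^2/(1+\Delta\nu_s)}$, and grouping the exponential factors over each $s$, the exponent telescopes: $\frac{(\Delta\nu_s)^2}{1+\Delta\nu_s}+\frac{\Delta\nu_s}{1+\Delta\nu_s}=\Delta\nu_s$. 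This leaves $\mathfrak{E}(-\bar\nu_t)=e^{-\nu_t}\prod_{s\le t}(1+\Delta\nu_s)^{-1}e^{\Delta\nu_s}$, which is exactly $\mathfrak{E}(\nu_t)^{-1}$. The second identity is entirely analogous: $\Delta\tilde\nu_s=\frac{\Delta\nu_s}{1-\Delta\nu_s}$ gives $1+\Delta\tilde\nu_s=(1-\Delta\nu_s)^{-1}$, and the same grouping shows $\mathfrak{E}(\tilde\nu_t)^{-1}=e^{-\nu_t}\prod_{s\le t}(1-\Delta\nu_s)e^{\Delta\nu_s}=\mathfrak{E}(-\nu_t)$.

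The only point requiring care beyond routine algebra is the finiteness step, since without it the functions $\bar\nu,\tilde\nu$ need not exist and the infinite products in the definition of $\mathfrak{E}$ need not converge; I expect this to be the main (though modest) obstacle. Once finiteness and absolute convergence of $\sum_s|\Delta\nu_s|$ and $\sum_s(\Delta\nu_s)^2$ are established, the regrouping of the products into per-jump factors is justified by absolute convergence, and the telescoping identities above complete the proof.
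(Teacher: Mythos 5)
Your proof is correct and follows essentially the same route as the paper: establish finiteness of the correction sum by splitting the jumps (your fixed threshold $-\tfrac12$ plus finitely-many-large-jumps argument is a minor variant of the paper's uniform bound $\Delta\nu_s>\epsilon-1$), then verify the identities via the per-jump algebra $1-\Delta\bar\nu_s=(1+\Delta\nu_s)^{-1}$ and $1+\Delta\tilde\nu_s=(1-\Delta\nu_s)^{-1}$ with regrouping of the exponential factors. Your explicit remark that the regrouping is licensed by absolute convergence is a point the paper leaves implicit, but it does not change the argument.
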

\begin{pf}
Consider first the left-jump-inversion. We know that $\Delta\nu_s>-1$ and
$\sum|\Delta\nu_s|<\infty$. Hence it follows that $\Delta\nu_s$ has only
finitely many values in any neighborhood not containing zero and hence is
bounded away from $-1$. That is, there exists some $\varepsilon>0$ such that
$\Delta\nu_s> \varepsilon-1$ for all $s$. To show finiteness, write
\[
\sum_{\{0\leq s\leq t\}\cap\{\Delta\nu_s \geq0\}} \frac{(\Delta\nu_s)^2}
{1+\Delta\nu_s} \leq\sum_{\{0\leq s\leq t\}\cap\{\Delta\nu_s \geq0\}}
|\Delta
\nu_s|<\infty\vadjust{\goodbreak}
\]
and
\begin{eqnarray*}
\sum_{\{0\leq s\leq t\}\cap\{\Delta\nu_s < 0\}} \frac
{(\Delta\nu_s)^2}{1+\Delta\nu_s} &\leq& \varepsilon^{-1}\biggl(\sum_{\{
0\leq s\leq t\}\cap\{\Delta\nu_s < 0\}} (\Delta\nu_s)^2\biggr)\\
&<& \varepsilon^{-1}\biggl(\sum_{\{0\leq s\leq t\}\cap\{\Delta\nu_s < 0\}}
|\Delta\nu_s|\biggr)\\
&<&\infty.
\end{eqnarray*}
Combining these sums gives the desired finiteness result.

We now note that, algebraically,
\[
(1-\Delta\bar\nu_s)^{-1}=\biggl(1-\Delta\nu_s + \frac{(\Delta\nu
_s)^2}{1+\Delta\nu_s}\biggr)^{-1} = 1+\Delta\nu_s.
\]
Hence
\begin{eqnarray*}
\mathfrak{E}(\nu_t)^{-1}&=&e^{-\nu_t}\prod_{0\leq s\leq t}(1+\Delta\nu
_s)^{-1}e^{\Delta\nu_s}\\
&=&e^{-\nu_t+\sum_{0<s\leq t}({(\Delta\nu_s)^2}/({1+\Delta\nu
_s}))}\prod_{0\leq s\leq t}(1+\Delta\nu_s)^{-1}e^{\Delta\nu
_s-{(\Delta\nu_s)^2}/({1+\Delta\nu_s})}\\
&=&e^{-\bar\nu_t}\prod_{0\leq s\leq t}(1-\Delta\bar\nu_s)e^{\Delta\bar
\nu_s}\\
&=&\mathfrak{E}(-\bar\nu_t).
\end{eqnarray*}

The proof for the right-jump inversion follows in the same way, where
finiteness is because
\[
\sum_{0\leq s\leq t} \frac{(\Delta\nu_s)^2}{1-\Delta\nu_s} = \sum
_{0\leq s\leq t} \frac{(-\Delta\nu_s)^2}{1+(-\Delta\nu_s)},
\]
and $-\nu_s$ satisfies the requirements given above for the left-jump
inversion. The algebraic result is then that
\[
(1+\Delta\tilde\nu_s)^{-1} = \biggl(1+\Delta\nu_s + \frac{(\Delta\nu
_s)^2}{1-\Delta\nu_s}\biggr)^{-1} = 1-\Delta\nu_s,
\]
and the result is as given.
\end{pf}
\begin{lemma}
For $\nu$ a c\`adl\`ag function of bounded variation with $\Delta\nu
_s>-1$, the right-jump inversion of the left-jump inversion of $\nu$ is
the original function, that is,
\[
\tilde{\bar{\nu}}_t = \nu_t.
\]
Similarly, if $\Delta\nu_s<-1$, then $\bar{\tilde{\nu}}_t=\nu_t$.
\end{lemma}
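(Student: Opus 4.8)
The plan is to proceed by direct computation, exploiting the fact that neither jump-inversion alters the continuous part of $\nu$ and that both preserve the set of jump times. Writing $a = \Delta\nu_s$ at a generic jump time $s$, the map defining the left-jump-inversion sends the jump $a$ to $\Delta\bar\nu_s = a - \frac{a^2}{1+a} = \frac{a}{1+a}$, which vanishes precisely when $a=0$; hence $\bar\nu$ jumps at exactly the same times as $\nu$, and its continuous part coincides with that of $\nu$. Thus it suffices to show that the jump-correction terms introduced by the two successive inversions cancel.

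Before computing, I would check that the composition is well-defined: the right-jump-inversion of $\bar\nu$ requires $\Delta\bar\nu_s < 1$, and indeed $\frac{a}{1+a} = 1 - \frac{1}{1+a} < 1$ whenever $1+a>0$, i.e.\ whenever $\Delta\nu_s > -1$, which is our hypothesis. The finiteness of all the jump sums involved is guaranteed by the preceding lemma, so that the terms may be manipulated and cancelled individually.

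The core step is a one-line algebraic identity. Setting $b = \Delta\bar\nu_s = \frac{a}{1+a}$, one has $1-b = \frac{1}{1+a}$ and therefore
\[\frac{b^2}{1-b} = \frac{a^2/(1+a)^2}{1/(1+a)} = \frac{a^2}{1+a}.\]
Substituting this into the definition of the right-jump-inversion gives
\[\tilde{\bar\nu}_t = \bar\nu_t + \sum_{0\leq s\leq t}\frac{(\Delta\bar\nu_s)^2}{1-\Delta\bar\nu_s} = \bar\nu_t + \sum_{0\leq s\leq t}\frac{(\Delta\nu_s)^2}{1+\Delta\nu_s} = \nu_t,\]
the last equality being exactly the definition of $\bar\nu_t$ with the subtracted sum restored.

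The second identity follows by the same computation with the roles of the two inversions interchanged; here the hypothesis must read $\Delta\nu_s < 1$, so that $\tilde\nu$ is defined in the first place. The right-jump-inversion sends $a$ to $\Delta\tilde\nu_s = \frac{a}{1-a}$ (which exceeds $-1$, so the outer left-jump-inversion is admissible), and the analogous identity $\frac{(\Delta\tilde\nu_s)^2}{1+\Delta\tilde\nu_s} = \frac{a^2}{1-a}$ shows that the left-jump-inversion subtracts off precisely the sum that $\tilde\nu$ added, restoring $\nu$. I expect no substantial obstacle: the only points needing care are verifying the well-definedness of the outer inversion (done above via the jump bounds) and invoking the earlier finiteness result to justify treating the jump sums term by term.
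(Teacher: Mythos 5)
Your proof is correct and takes essentially the same route as the paper's: both reduce the problem to the jumps (since neither inversion alters the continuous part) and then verify the same one-line algebraic identity, yours in the form $\frac{(\Delta\bar\nu_s)^2}{1-\Delta\bar\nu_s}=\frac{(\Delta\nu_s)^2}{1+\Delta\nu_s}$ (cancellation of the correction sums), the paper's in the equivalent form $\Delta\tilde{\bar\nu}_t=\Delta\nu_t$ (the composed jump map is the identity). Your explicit check that the outer inversion is well defined ($\Delta\bar\nu_s<1$ whenever $\Delta\nu_s>-1$), and your observation that the hypothesis of the second claim should read $\Delta\nu_s<1$ rather than the paper's $\Delta\nu_s<-1$ (a typo), are both correct refinements that the paper leaves implicit.
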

\begin{pf}
For simplicity, we decompose $\nu$ into a discontinuous part $\nu^d_t:=
\sum_{0\leq s\leq t} \Delta\nu_s$ and a continuous part $\nu^c_t=\nu
_t-\nu_d$. Clearly,\vspace*{1pt} taking either the left- or right-jump inversion
will not alter the continuous part $\nu^c$, and so it is sufficient to
show that the discontinuous parts are equal, that is, $\Delta\tilde{\bar
\nu}_t= \Delta\bar{\tilde\nu}_t = \Delta\nu_t$ for all $t$, whenever
these terms are well defined. From Definition~\ref{defnjumpinversion}
we have
\[
\Delta\bar\nu_t = \frac{\Delta\nu_t}{1+\Delta\nu_t},\qquad \Delta\tilde
\nu_t = \frac{\Delta\nu_t}{1-\Delta\nu_t}
\]
and hence
\[
\Delta\tilde{\bar\nu}_t = \frac{\Delta\bar\nu_t}{1-\Delta\bar\nu_t} =
\frac{{\Delta\nu_t}/({1+\Delta\nu_t})}{1-{\Delta\nu_t}/({1+\Delta
\nu_t})} = \Delta\nu_t,
\]
and similarly $\Delta\bar{\tilde\nu}_t = \Delta\nu_t$, as desired.\vspace*{-2pt}
\end{pf}

\subsection{Integrating factors}
It is useful to have some results relating to the solutions of
equations of the form $du_t - u_{t-}\,d\nu_t = \cdots.$ These are similar
the the classical results on the use of integrating factors and Gr\"
onwall's inequality in the study of ordinary differential
equations.\vspace*{-2pt}
\begin{definition}
Let $u, v$ be two measures on a $\sigma$-algebra $\mathcal{A}$. We
write $du\leq dv$ if, for any $A\in\mathcal{A}$, $u(A) \leq v(A)$.\vspace*{-2pt}
\end{definition}
\begin{remark}
When $v$ is a nonnegative measure, and $u$ is absolutely continuous
with respect to $v$, this definition is equivalent to requiring that
the Radon--Nikodym derivative satisfies $du/dv\leq1$, $dv$-a.e.\vspace*{-2pt}
\end{remark}
\begin{lemma} \label{lemintegratingfactorbound}
Let $u$, $\nu$ and $w$ be signed Stieltjes measures on $\mathcal
{B}([0,T])$, such that $\Delta\nu_t < 1$ for all $t$, and
\[
du_t \geq- u_{t-} \,d\nu_t + dw_t,
\]
then
\[
d(u_t \mathfrak{E}(\tilde\nu_t)) \geq(1-\Delta\nu_t)^{-1}\mathfrak
{E}(\tilde\nu_{t-})\,dw_t,
\]
where $\tilde\nu$ is the right-jump inversion of $\nu$.\vspace*{-2pt}
\end{lemma}
\begin{pf}
Applying the product rule for Stieltjes integrals we have
\[
\frac{d(u_t \mathfrak{E}(\tilde\nu_t))}{\mathfrak{E}(\tilde\nu_{t-})}
= du_t+u_{t-}\,d\tilde\nu_t + \Delta u_t\Delta\tilde\nu_t.
\]
As $d\tilde\nu_t = d\nu/(1-\Delta\nu_t)$ and $\Delta u_t\Delta\nu_t=
(\Delta\nu_t) \,du_t$, this gives
\begin{eqnarray*}
\frac{d(u_t \mathfrak{E}(\tilde\nu_t))}{\mathfrak{E}(\tilde\nu_{t-})}
&=& du_t+u_{t-}\frac{d\nu_t}{1-\Delta\nu_t} + \frac{\Delta\nu_t}{1-\Delta
\nu_t}\,du_t\\
&=& \biggl(1+\frac{\Delta\nu_t}{1-\Delta\nu_t}\biggr)\,du_t+u_{t-}\frac{d\nu
_t}{1-\Delta\nu_t}\\
&=& (1-\Delta\nu_t)^{-1} (du_t + u_{t-}\,d\nu_t)\\
&\geq&(1-\Delta\nu_t)^{-1} \,dw_t.
\end{eqnarray*}
\upqed\end{pf}
\begin{lemma}[(Backward Gr\"onwall inequality)]
\label{lemgroenwallinequality}
Let $u$ be a process such that, for $\nu$ a nonnegative Stieltjes
measure with $\Delta\nu_t<1$ and $\alpha$ a $\tilde\nu$-integrable
process, $u$ is $\nu$-integrable and
\[
u_{t} \leq\alpha_t + \int_{]t,T]} u_{s} \,d\nu_s,
\]
then
\[
u_{t} \leq\alpha_t + \mathfrak{E}(-\nu_t)\int_{]t,T]}\mathfrak
{E}(\tilde\nu_{s}) \alpha_s \,d\tilde\nu_s.
\]
If $\alpha_t=\alpha$ is constant, this simplifies to
\[
u_{t} \leq\alpha\mathfrak{E}(\tilde\nu_T) \mathfrak{E}(\tilde\nu
_t)^{-1}=\alpha\mathfrak{E}(-\nu_t)\mathfrak{E}(-\nu_T)^{-1}.
\]
\end{lemma}
\begin{pf}
First note that $d\nu_t = \frac{d\tilde\nu}{1+\Delta\tilde\nu_t}$ and
that $\Delta\tilde\nu_t\Delta\nu_t=\Delta\tilde\nu_t\,d\nu_t$. Then~let
\[
w_t := \mathfrak{E}(\tilde\nu_t)\int_{]t,T]} u_{s} \,d\nu_s.
\]
From the product rule for stochastic integrals, as $\nu$ is of finite variation,
\begin{eqnarray*}
\frac{dw_t}{\mathfrak{E}(\tilde\nu_{t-})} &=& \biggl(\int_{]t,T]} u_{s}
\,d\nu_s\biggr)\,d\tilde\nu_s - u_{t} \,d\nu_t - u_{t}\Delta\nu_t\Delta\tilde
\nu_t\\
&=& -u_{t} (1+\Delta\tilde\nu_t)\,d\nu_t + \biggl(\int_{]t,T]} u_{s} \,d\nu
_s\biggr)\,d\tilde\nu_t\\
&=& -u_{t} \,d\tilde\nu_t + \biggl(\int_{]t,T]} u_{s} \,d\nu_s\biggr)\,d\tilde
\nu_t\\
&=& \biggl(-u_{t} + \int_{]t,T]} u_{s} \,d\nu_s\biggr)\,d\tilde\nu_t\\
&\geq&-\alpha_t\,d\tilde\nu_t.
\end{eqnarray*}
Note that $d\tilde\nu_t$ and $\mathfrak{E}(\tilde\nu_{t-})$ are both
nonnegative. Therefore, by integration,
\[
w_t = \mathfrak{E}(\tilde\nu_t)\int_{]t,T]} u_{s} \,d\nu_s \leq\int
_{]t,T]} \mathfrak{E}(\tilde\nu_{s-}) \alpha_s\,d\tilde\nu_s.
\]
Substitution yields
\[
u_{t} \leq\alpha_t + \mathfrak{E}(\tilde\nu_t)^{-1}\int
_{]t,T]}\mathfrak{E}(\tilde\nu_{s-}) \alpha_s \,d\tilde\nu_s,\vadjust{\goodbreak}
\]
and the desired inequalities follow from $\mathfrak{E}(\tilde\nu
_t)^{-1}=\mathfrak{E}(-\nu_t)$.
If $\alpha_t=\alpha$, then this simplifies to
\begin{eqnarray*}
u_{t-} &\leq&\alpha\biggl[ 1 + \mathfrak{E}(\tilde\nu_t)^{-1}\int
_{]t,T]}\mathfrak{E}(\tilde\nu_{s-}) \,d\tilde\nu_s\biggr]\\
&=& \alpha\bigl[ 1 + \mathfrak{E}(\tilde\nu_t)^{-1}\bigl(\mathfrak{E}(\tilde
\nu_{T})-\mathfrak{E}(\tilde\nu_{t})\bigr)\bigr]\\
&=& \alpha\mathfrak{E}(\tilde\nu_{T})\mathfrak{E}(\tilde\nu_t)^{-1}.
\end{eqnarray*}
\upqed\end{pf}
\begin{lemma}[(Forward Gr\"onwall inequality)]
Let $u$ be a function such that, for $\nu$ a nonnegative Stieltjes
measure and $\alpha$ a $\bar\nu$-integrable process, $u$~is $\nu
$-integrable and
\[
u_{t} \leq\alpha_t + \int_{]0,t]} u_{s} \,d\nu_s,
\]
then
\[
u_{t} \leq\alpha_t + \mathfrak{E}(\nu_t)\int_{]0,t]}\mathfrak{E}(-\bar
\nu_{s}) \alpha_s \,d\bar\nu_s.
\]
If $\alpha_t=\alpha$ is constant, this simplifies to
\[
u_{t} \leq\alpha\mathfrak{E}(\nu_t).
\]
\end{lemma}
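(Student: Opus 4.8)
The plan is to mirror the proof of the Backward Gr\"onwall Inequality (Lemma \ref{lem:groenwallinequality}), interchanging the roles of the two jump-inversions: in place of the right-jump-inversion $\tilde\nu$ and the factor $\mathfrak{E}(\tilde\nu)$, I would use the left-jump-inversion $\bar\nu$ and the integrating factor $\mathfrak{E}(-\bar\nu_t)=\mathfrak{E}(\nu_t)^{-1}$, and in place of the backward integral $\int_{]t,T]}$ I would use the forward integral $\int_{]0,t]}$. The reason for passing to $\bar\nu$ rather than working with $\nu$ directly is that, by Lemma \ref{lem:expproperties}, $\mathfrak{E}(-\bar\nu_t)=\mathfrak{E}(\nu_t)^{-1}$ is strictly positive, so multiplication by it preserves inequalities; by contrast $\mathfrak{E}(-\nu_t)$ may vanish or change sign where $\Delta\nu_t\geq 1$, and so is useless as an integrating factor here.

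First I would record the jump and measure identities linking $\nu$ and $\bar\nu$. From Definition \ref{defn:jumpinversion}, $\Delta\bar\nu_t=\Delta\nu_t/(1+\Delta\nu_t)$, whence $1-\Delta\bar\nu_t=(1+\Delta\nu_t)^{-1}$, $(1-\Delta\bar\nu_t)\Delta\nu_t=\Delta\bar\nu_t$, and, as measures, $d\bar\nu_t=(1-\Delta\bar\nu_t)\,d\nu_t$ (the continuous parts agree and the jump parts match by the previous identity). These are the forward counterparts of the identity $d\nu_t=d\tilde\nu_t/(1+\Delta\tilde\nu_t)$ used in Lemma \ref{lem:groenwallinequality}, and they are exactly what let one interchange $d\nu$ and $d\bar\nu$ at the jump times. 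I would also note that, again by Lemma \ref{lem:expproperties}, $\mathfrak{E}(-\bar\nu_t)$ is of finite variation and solves $d\mathfrak{E}(-\bar\nu_t)=-\mathfrak{E}(-\bar\nu_{t-})\,d\bar\nu_t$ (the jumps of $-\bar\nu$ exceed $-1$ since $\Delta\bar\nu_t<1$).

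Next I would set $V_t:=\int_{]0,t]}u_s\,d\nu_s$, so the hypothesis reads $u_t\leq\alpha_t+V_t$, and introduce $w_t:=\mathfrak{E}(-\bar\nu_t)V_t$, noting $w_0=0$. Applying the product rule for finite-variation processes to $w=\mathfrak{E}(-\bar\nu)\,V$ and simplifying the continuous and jump contributions with the identities above should collapse everything to
\[\frac{dw_t}{\mathfrak{E}(-\bar\nu_{t-})}=(u_t-V_{t-})\,d\bar\nu_t.\]
Using the hypothesis to bound $u_t-V_{t-}$ by $\alpha_t$, then integrating over $]0,t]$ (where $d\bar\nu\geq0$ and $\mathfrak{E}(-\bar\nu_{t-})>0$, so the inequality is preserved) and dividing through by $\mathfrak{E}(-\bar\nu_t)=\mathfrak{E}(\nu_t)^{-1}$, yields the stated bound. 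The constant-$\alpha$ case then follows from the telescoping identity $\int_{]0,t]}\mathfrak{E}(-\bar\nu_{s-})\,d\bar\nu_s=1-\mathfrak{E}(\nu_t)^{-1}$, which is immediate from the differential equation for $\mathfrak{E}(-\bar\nu)$; multiplying by $\mathfrak{E}(\nu_t)$ and adding $\alpha_t=\alpha$ gives $u_t\leq\alpha\,\mathfrak{E}(\nu_t)$.

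The main obstacle is the behaviour at the (at most countable) set of jump times. Because the forward integral is anchored at its right endpoint, the product rule produces the left limit $V_{t-}$ in the increment of $w$, whereas the hypothesis controls $u_t$ only through the right-continuous value $V_t=V_{t-}+u_t\Delta\nu_t$; this is precisely the point at which the backward argument could use $V_t$ directly, and it is the forward--backward asymmetry that must be handled with care. Passing from $\nu$ to $\bar\nu$ divides the jumps by $1+\Delta\nu_t>0$, so all quantities multiplied or divided at a jump stay strictly positive, and the continuous part is trivial (there $d\nu_t=d\bar\nu_t$ and $V_{t-}=V_t$). The delicate steps are therefore to confirm that, after passing to $\bar\nu$, the jump contribution to $dw_t$ is still controlled by $\alpha_t\,d\bar\nu_t$ so that the bound used under the integral sign is legitimate, and to reconcile the left-continuous factor $\mathfrak{E}(-\bar\nu_{s-})$ produced by the computation with the factor $\mathfrak{E}(-\bar\nu_s)$ appearing in the statement, exactly as in the corresponding step of Lemma \ref{lem:groenwallinequality}.
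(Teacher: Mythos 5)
Your overall strategy is the natural mirror of the paper's backward argument (which is all the paper offers here, since it declares the proof ``almost identical'' and omits it), and your product-rule computation is correct: with $V_t=\int_{]0,t]}u_s\,d\nu_s$ and $w_t=\mathfrak{E}(-\bar\nu_t)V_t$ one does get $dw_t=\mathfrak{E}(-\bar\nu_{t-})(u_t-V_{t-})\,d\bar\nu_t$. The genuine gap is exactly the step you defer to your final paragraph: at an atom of $\nu$ the hypothesis does \emph{not} control $u_t-V_{t-}$ by $\alpha_t$, and this cannot be repaired. The hypothesis gives $u_t-V_t\leq\alpha_t$, while $V_{t-}=V_t-u_t\Delta\nu_t$, so $u_t-V_{t-}=(u_t-V_t)+u_t\Delta\nu_t\leq\alpha_t+u_t\Delta\nu_t$, and the self-referential term $u_t\Delta\nu_t$ is unbounded above by anything in the hypothesis. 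The root cause is an asymmetry you half-identify but do not resolve: the backward integral $\int_{]t,T]}$ \emph{excludes} the jump at the evaluation point, whereas the forward integral $\int_{]0,t]}$ \emph{includes} it, so at an atom the forward hypothesis reads $u_t(1-\Delta\nu_t)\leq\alpha_t+V_{t-}$; this bounds $u_t$ only when $\Delta\nu_t<1$, and then with the factor $(1-\Delta\nu_t)^{-1}$, not the factor $(1+\Delta\nu_t)$ that appears in $\mathfrak{E}(\nu_t)$.

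In fact the lemma as stated is false whenever $\nu$ has atoms, so no proof can close your gap. Take $\nu$ to be a single atom of mass $c\in\,]0,1[$ at time $t_0$, take $\alpha_t\equiv\alpha>0$, and set $u_t=\frac{\alpha}{1-c}$ for $t\geq t_0$, $u_t=0$ for $t<t_0$. Then
\[u_t\leq\alpha+\int_{]0,t]}u_s\,d\nu_s\quad\text{for all }t,\]
with equality for $t\geq t_0$, yet $u_{t_0}=\frac{\alpha}{1-c}>\alpha(1+c)=\alpha\,\mathfrak{E}(\nu_{t_0})$, violating the constant-$\alpha$ conclusion; the general form fails as well, since $\alpha+\mathfrak{E}(\nu_{t_0})\int_{]0,t_0]}\mathfrak{E}(-\bar\nu_{s-})\alpha\,d\bar\nu_s=\alpha(1+c)$ (and the right-continuous reading of the integrand gives an even smaller bound). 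The statement becomes true, and your argument then closes essentially verbatim, if the integrand in the hypothesis is $u_{s-}$ rather than $u_s$: then $\Delta V_t=u_{t-}\Delta\nu_t$, the computation produces $(u_{t-}-V_{t-})\,d\bar\nu_t$, and the left-limited hypothesis $u_{t-}\leq\alpha_{t-}+V_{t-}$ supplies precisely the bound needed. Alternatively one must assume $\Delta\nu_t<1$ and replace $\mathfrak{E}(\nu_t)$ by $\mathfrak{E}(-\nu_t)^{-1}=\mathfrak{E}(\tilde\nu_t)$ in the conclusion. Note this also means the paper's assertion that the forward case follows ``in an almost identical fashion'' to Lemma \ref{lem:groenwallinequality} is itself too quick: the forward--backward asymmetry at jumps changes both the admissible hypotheses and the correct exponential factor.
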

\begin{pf}
This result follows in an almost identical fashion to Lemma \ref
{lemgroenwallinequality}, and the proof is therefore omitted.
\end{pf}

\section{Existence of BSDE solutions: Fundamental results}

In this section we shall establish the existence of solutions to BSDEs
when the process $\mu$ satisfies particular properties.
\begin{definition}\label{defnmudefn}
Let $\mu$ be a deterministic nondecreasing right-continuous function
$\mu\dvtx[0,T]\to\mathbb{R}^+$. The measure $d\mu$ will serve in the place
of the Lebesgue measure $dt$ in our BSDE.

As $\mu$ is of finite variation, its discontinuities $\Delta\mu$ are
bounded. We assume that $\mu$ assigns positive measure to any nonempty
open interval in $[0,T]$.
\end{definition}

Unless otherwise indicated, all (in-)equalities should be read as ``up
to evanescence.''
\begin{definition}
We denote by \mbox{$\|\cdot\|$} the standard Euclidean norm on~$\mathbb{R}^K$,
and note that $\|y\|^2= y^*y$, where $[\cdot]^*$ denotes vector transposition.
\end{definition}
\begin{definition}\label{defnMnorm}
For a given $\mu$ and fixed $K\in\mathbb{N}$, we define the stochastic
seminorm \mbox{$\|\cdot\|_{M_t}$} on $\mathbb{R}^{K\times\infty}$ as follows.
For each $i\in\mathbb{N}$, consider $\langle M^i\rangle$ as a~measure
on the predictable $\sigma$-algebra; cf. Remark \ref
{reminducedmeasurerem}. Let $\langle M^i\rangle$ have the Lebesgue
decomposition
\[
\langle M^i\rangle_t = m^{i,1}_t + m^{i,2}_t,\vadjust{\goodbreak}
\]
where $m^{i,1}_t$ is absolutely continuous with respect to $\mu\times
\mathbb{P}$, and $m^{i,2}_t$ is orthogonal to $\mu\times\mathbb{P}$.
As they represent bounded measures on the predictable $\sigma$-algebra,
both $m^{i,1}_t$ and $m^{i,2}_t$ will be nondecreasing predictable processes.

We define, for $z_t\in\mathbb{R}^{K\times\infty}$,
\[
\|z_t\|^2_{M_t} := \sum_i \biggl[\|z_t^i\|^2 \frac{dm^{i,1}_t}{d(\mu
\times\mathbb{P})}\biggr],
\]
where $z^i_t\in\mathbb{R}^K$ is the $i$th element in $z_t$, considered
as a series of values in~$\mathbb{R}^K$.

We note that, for any predictable, progressively measurable process $Z$
taking values in $\mathbb{R}^{K\times\infty}$, and, in particular, for
processes satisfying~(\ref{eqZbound}) in each of their $K$ components,
we have the inequality
%
\begin{eqnarray}\label{eqisometry}
E\biggl[\int_{A}\|Z_t\|^2_{M_t} \,d\mu\biggr] &\leq& E\biggl[\sum_i\int_{A}\|
Z_t^i\|^2\,d\langle M^i_t\rangle\biggr]\nonumber\\
&=&E\biggl[\sum_i\biggl\|\int_{A}Z_t^i\,dM^i_t\biggr\|^2\biggr]\\
&=&E\biggl[
\biggl\|\sum_i\int_{A}Z_t^i\,dM^i_t\biggr\|^2\biggr]
\nonumber
\end{eqnarray}
for any predictable set $A\subseteq\Omega\times[0,T]$. (Note the
latter equalities are simply the standard isometry\vspace*{1pt} used in the
construction of the stochastic integral, by the orthogonality of the $M^i$.)
\end{definition}
\begin{definition} \label{defnprocessspaces}
We define the following spaces of equivalence classes:
\begin{eqnarray*}
H^2_M &=&\biggl\{Z\dvtx\Omega\times[0,T]\to\mathbb{R}^{K\times\infty}\mbox{,
predictable, }\\
&&\hspace*{17.4pt} E\biggl[\sum_i\int_{[0,T]}\|Z_t^i\|^2 \,d\langle
M^i\rangle_t\biggr] <+\infty\biggr\},\\
S^2 &=& \Bigl\{Y\dvtx\Omega\times[0,T]\to\mathbb{R}^K\mbox{,  adapted, }
E\Bigl[\sup_{t\in[0,T]} \|Y_t\|^2\Bigr]<+\infty\Bigr\},\\
H^2_\mu&=& \biggl\{Y\dvtx\Omega\times[0,T] \to\mathbb{R}^K\mbox{,
progressive, }\int_{]0,T]} E[\|Y_{t}\|^2] \,d\mu_t<+\infty\biggr\},
\end{eqnarray*}
where two elements $Z, \bar Z$ of $H^2_M$ are deemed equivalent if
\[
E\biggl[\sum_i\int_{[0,T]}\|Z_t^i-\bar Z_t^i\|^2\,d\langle M^i\rangle
_t\biggr] =0,
\]
two elements of $S^2$ are deemed equivalent if they are
indistinguishable and two elements of $H^2_\mu$ are equivalent if they
are equal $\mu\times\mathbb{P}$-a.s. Note that $K$ is here taken as fixed.
\end{definition}
\begin{remark}
We note that $H^2_M$ is itself a complete metric space, with norm given
by $Z\mapsto E[\sum_i\int_{[0,T]}\|Z_t^i\|^2 \,d\langle M^i\rangle
_t]$; similarly for $H^2_\mu$. Note also that the martingale
representations constructed in Theorem~\ref{thmmartrep} are unique in $H^2_M$.
\end{remark}

A key assumption in the study of BSDEs is the continuity of the driver
function~$F$. When the measure $\mu$ is continuous, we shall show that
it is sufficient that $F$ is uniformly Lipschitz continuous for the
BSDE~(\ref{eqBSDE}) to have a solution. On the other hand, as is clear
in discrete time (cf.~\cite{Cohen2009a}), when $\mu$ is not continuous,
a stronger condition is needed on $F$. We shall call this a~\textit{firm}
Lipschitz bound on $F$, as is defined in the following theorem.
\begin{theorem}\label{thmBSDEExist}
For $\mu$ as in Definition~\ref{defnmudefn}, assume $\mu_T\leq1$. Let
$F\dvtx\Omega\times[0,T]\times\mathbb{R}^K\times\mathbb{R}^{K\times\infty
}\to\mathbb{R}^K$ be a predictable, progressively measurable function
such that:
\begin{itemize}
\item$E[\int_{]0,T]} \|F(\omega, t,
0,0)\|^2\,d\mu_t]<+\infty$;\vspace*{1pt}
\item there exists a linear firm Lipschitz bound on $F$, that is, a
measurable deterministic function $c_t$ uniformly bounded by some $c\in
\mathbb{R}$, such that, for any $y_t, y_t'\in\mathbb{R}^K$, $z_t,
z'_t\in\mathbb{R}^{K\times\infty}$,
\begin{eqnarray*}
&&\|F(\omega, t, y_t, z_t) - F(\omega, t, y'_t, z'_t)\|^2\\
&&\qquad\leq c_t \|
y_t-y'_t\|^2 + c\|z_t-z'_t\|^2_{M_t},\qquad   \,d\mu\times
d\mathbb{P}\mbox{-a.s.}
\end{eqnarray*}
and
\[
c_t\Delta\mu_t<1.
\]
Note that the variable bound $c_t$ need only apply to the behavior of
$F$ with respect to $y$.
\end{itemize}
A function\vspace*{1pt} satisfying these conditions will be called \textit{standard}.
Then for any $Q\in L^2(\mathbb{R}^K;\mathcal{F}_T)$, the BSDE (\ref
{eqBSDE}) with driver $F$ has a unique solution $(Y,Z)\in S^2\times
H^2_M$. ($S^2$ and $H^2_M$ are defined in Definition~\ref{defnprocessspaces}.)
\end{theorem}

To prove this theorem, we first establish the following results.
\begin{lemma} \label{lemH2S2equiv}
If $\mu$ assigns positive measure to every nonempty open interval, then
two c\`adl\`ag processes in $H^2_\mu$ are indistinguishable if and only
if they are equivalent in~$H^2_\mu$. Similarly, two c\`adl\`ag
processes are equivalent in~$H^2_\mu$ if and only if their left limits
are equivalent in~$H^2_\mu$.
\end{lemma}
\begin{pf}
Clearly\vspace*{1pt} indistinguishability implies equivalence of the processes, and
their left limits, in $H^2_\mu$. By right continuity (resp., left
continuity), if on some nonnull set $A$, two processes (resp., their
left limits) differ at any point, they must differ on some nonempty
open interval. As $\mu$\vadjust{\goodbreak} assigns positive measure to such an interval,
it follows that the processes will not be equivalent in $H^2_\mu$.
\end{pf}
\begin{lemma} \label{lemYquadbound}
Let $(Y,Z)$ be the solution to a BSDE with data $(F,Q)$. If~$F$ is
standard, $Q\in L^2(\mathbb{R}^K;\mathcal{F}_T)$ and $Z\in H^2_M$, then
$Y\in S^2$ if and only if the left limit process $Y_{t-}\in H^2_\mu$.
\end{lemma}
\begin{pf}
Clearly, if $Y\in S^2$, then as $Y$ is c\`adl\`ag and adapted, and
hence progressive, $Y\in H^2_M$. For the converse, write
\begin{eqnarray*}
\sup_{t\in[0,T]}\|Y_{t}\|^2
&\leq& 2\|Q\|^2+ 4\sup_{t\in[0,T]}\biggl\|\sum_i\int
_{]t,T]}Z_u^i\,dM^i_u\biggr\|^2\\
&&{} + 4 \sup_{t\in[0,T]}\biggl\{\int_{]t,T]} \|F(\omega, u,
Y_{u-},Z_u)\|^2\,d\mu\biggr\}\\
&\leq& 2\|Q\|^2+ 4\sup_{t\in[0,T]}\biggl\|\sum_i\int
_{]t,T]}Z_u^i\,dM^i_u\biggr\|^2\\
&&{} + 8 \int_{]0,T]} \|F(\omega, u, 0,0)\|^2\,d\mu_t\\
&&{} + 8 \int_{]0,T]}
[c_t\|Y_{u-}\|^2+c\|Z_u\|^2_{M_u}]\,d\mu_t,\\
\end{eqnarray*}
and by\vspace*{1pt} the assumptions of the lemma, as $Z\in H^2_M$, and so $\sum_i\int
_{]0,t]}Z_u^i \,dM_u^i$ is a square integrable martingale, by Doob's
inequality~\cite{Jacod2003}, Theorem 1.43, this quantity is finite in expectation.
\end{pf}

The following lemma provides the key bounds on BSDE solutions, which we
shall use to prove existence and uniqueness of solutions.
\begin{lemma} \label{lemsolutionbounds}
Let\vspace*{1pt} $(Y, Z)$ and $(\bar Y, \bar Z)$ be the solutions to two BSDEs with
standard parameters $(F,Q)$ and $(\bar F,\bar Q)$. Define
%
\begin{eqnarray}\label{equppirhodefn}
\delta Y&:=& Y-\bar Y, \qquad  \delta Z := Z-\bar Z,\nonumber\\
\delta_2 f_t&:=&F(\omega, t, \bar Y_{t-}, \bar Z_t)-\bar F(\omega, t,
\bar Y_{t-}, \bar Z_t),\nonumber\\
\upsilon_t &:=& \int_{]0,t]} [(x_s^{-1} - \Delta\mu_s)(1+w_s)c_s
+x_s]\,d\mu_s,\\
\pi_t &:=& \int_{]0,t]} [(x_s^{-1} - \Delta\mu_s)(1+w_s^{-1})](1-\Delta
\upsilon_s)^{-1}\,d\mu_s,\nonumber\\
\rho_t^i &:=& \int_{]0,t]} [1-(x_s^{-1}-\Delta\mu_s)(1+w_t)c](1-\Delta
\upsilon_s)^{-1}\,d\langle M^i\rangle_t,\nonumber
\end{eqnarray}
where $c_s$ and $c$ are the Lipschitz constants of $F$, and $x_t$,
$w_t$ are any nonnegative measurable functions such that $\Delta\mu
_t\leq x_t^{-1}$ and $\Delta\upsilon_t<1$ for all $t$, and the
integrands defining $\upsilon,\pi$ and $\rho^i$ are uniformly bounded.

Then
%
\begin{eqnarray}\label{eqBSDEBound1}
&&E[\|\delta Y_t\|^2] \mathfrak{E}(\tilde\upsilon_t) +E\biggl[ \sum_i\int
_{]t,T]}\mathfrak{E}(\tilde\upsilon_{s-})\|\delta Z_s^i\|^2 \,d\rho
_s^i\biggr]\nonumber\\[-9pt]\\[-9pt]
&&\qquad \leq E[\|\delta Q\|^2] \mathfrak{E}(\tilde\upsilon_T) + \int
_{]t,T]}
E[\|\delta_2f_s\|^2]\mathfrak{E}(\tilde\upsilon_{s-})\,d\pi_s\nonumber
\end{eqnarray}
and
%
\begin{eqnarray}\label{eqBSDEBound2}
&&\int_{]0,T]} E[\|\delta Y_{t-}\|^2] \mathfrak{E}(\tilde\upsilon
_{t-})\,d\mu_t + E\biggl[ \sum_i\int_{]0,T]}\mu_s\mathfrak{E}(\tilde
\upsilon_{s-})\|\delta Z_s^i\|^2
\,d\rho_s^i\biggr]\nonumber\\[-9pt]\\[-9pt]
&&\qquad \leq\mu_T E[\|\delta Q\|^2] \mathfrak{E}(\tilde\upsilon_T) +
\int_{]0,T]}\mu_s E[\|\delta_2f_s\|^2]\mathfrak{E}(\tilde\upsilon
_{s-})\,d\pi_s.
\nonumber
\end{eqnarray}
\end{lemma}
\begin{pf}
Let $\delta F = F(\omega, t, Y_{t-}, Z_t)-\bar F(\omega, t, \bar
Y_{t-}, \bar Z_{t})$. By application of the differentiation rule for
stochastic integrals, we have
%
\begin{eqnarray} \label{eqexpanddeltaY}\quad
d[\|\delta Y_t\|^2] &=& -2(\delta Y_{t-})^*(\delta F_t) \,d\mu_t + 2 \sum
_i(\delta Y_{t-})^*(\delta Z_t^i) \,dM^i_t\nonumber\\[-2pt]
&&{} + \sum_{i,j}(\delta Z_t^i)^*(\delta Z_t^j) \,d[ M^i, M^j]_t -2
(\delta F_t) (\Delta\mu_t) \sum_i (\delta Z_t^i) \Delta M^i_t\\[-2pt]
&&{} +\|\delta F_t\|^2 (\Delta\mu_t)^2.\nonumber
\end{eqnarray}
As $\delta Y\in S^2$, by the BDG inequality it is clear that $\int
_{]0,t]}\sum_i(\delta Y_{s-})^* (\delta Z_s^i) \,dM^i_s$ is a martingale.
Similarly the process
\[
\sum_{s\in]0,t]} \biggl[(\delta F_s) (\Delta\mu_s) \sum_i (\delta
Z_s^i) \Delta M^i_s \biggr]
\]
is a countable sum of integrable martingale differences and so is also
a martingale. Also, $\delta Z\in H^2_M$ and so, by orthogonality of the $M^i$,
\[
\sum_{i,j}(\delta Z_t^i)^*(\delta Z_t^j) \,d[ M^i, M^j]_t - \sum_i\|
\delta Z_t^i\|^2\,d\langle M^i\rangle_t
\]
is a martingale.

For any $A\in\mathcal{B}([0,T])$, integrating on $A$ and taking an
expectation through (\ref{eqexpanddeltaY}) then yields
\begin{eqnarray*}
\int_A dE[\|\delta Y_t\|^2]
&=& -2\int_AE[(\delta Y_{t-})^*(\delta F_t)] \,d\mu_t + E\biggl[\sum_i\int
_A\|\delta Z_t^i\|^2 \,d\langle M^i\rangle_t\biggr] \\[-2pt]
&&{} + \sum_{t\in A}E[\|\delta F_t\|^2] (\Delta\mu_t)^2.
\end{eqnarray*}
Using the fact that $(\Delta\mu_t)^2 = (\Delta\mu_t) (d\mu_t)$ and
that for any $x\geq 0$, any $a,b\in\mathbb{R}$, $\pm2ab \leq
xa^2+x^{-1} b^2$, we have, for any measurable function $x_t\geq0$,
%
\begin{eqnarray}\label{eqexpand1}\quad
\int_A dE[\|\delta Y_t\|^2] &\geq& -\int_Ax_t E[\|\delta Y_{t-}\|^2] \,d\mu
_t -\int_Ax_t^{-1} E[\|\delta F_t\|^2] \,d\mu_t \nonumber\\[-2pt]
&&{} + E\biggl[\sum_i\int_A\|\delta Z_t^i\|^2 \,d\langle M^i\rangle
_t\biggr] + \int_AE[\|\delta F_t\|^2] (\Delta\mu_t) \,d\mu_t\\[-2pt]
&=&-\int_Ax_t E[\|\delta Y_{t-}\|^2] \,d\mu_t -\int_A(x_t^{-1}-\Delta\mu
_t) E[\|\delta F_t\|^2] \,d\mu_t \nonumber\\[-2pt]
&&{} + E\biggl[\sum_i\int_A\|\delta Z_t^i\|^2 \,d\langle M^i\rangle
_t\biggr].
\nonumber
\end{eqnarray}

We now note that, for any measurable $w_t\geq0$, as $(a+b)^2 \leq
(1+w) a^2 + (1+w^{-1})b^2$ for all $w\geq0$,
\begin{eqnarray*}
\|\delta F_t\|^2 &\leq& (1+w_t)\|F(\omega, t, Y_{t-}, Z_t) - F(\omega,
t, \bar Y_{t-}, \bar Z_t)\|^2\\
&&{}  + (1+w_t^{-1})\|F(\omega, t,
\bar Y_{t-}, \bar Z_t)-\bar F(\omega, t, \bar Y_{t-}, \bar Z_t)\|^2\\
&\leq& (1+w_t)c_t \|\delta Y_{t-}\|^2 + (1+w_t) c\|\delta Z_t\|^2_{M_t}
+ (1+w_t^{-1})\|\delta_2 f_t\|^2.
\end{eqnarray*}
Hence, as $x_t^{-1}-\Delta\mu_t\geq0$,
%
\begin{eqnarray}\label{eqexpand2}
&&\int_A (x_t^{-1}-\Delta\mu_t)E[\|\delta F_t\|^2]\,d\mu_t \nonumber\\[-2pt]
&&\qquad\leq\int_A(x_t^{-1}-\Delta\mu_t)(1+w_t)c_t E[\|\delta Y_{t-}\|^2]\,d\mu
_t \nonumber\\[-2pt]
&&\qquad\quad{} + \int_A(x_t^{-1}-\Delta\mu_t)(1+w_t) cE[\|\delta Z_t\|^2_{M_t}]\,d\mu
_t \nonumber\\[-2pt]
&&\qquad\quad{} + \int_A(x_t^{-1}-\Delta\mu_t)(1+w_t^{-1})E[\|\delta_2 f_t\|
^2]\,d\mu_t \\[-2pt]
&&\qquad\leq\int_A(x_t^{-1}-\Delta\mu_t)(1+w_t)c_t E[\|\delta Y_{t-}\|^2]\,d\mu
_t \nonumber\\[-2pt]
&&\qquad\quad{} +
E\biggl[\sum_i\int_A(x_t^{-1}-\Delta\mu_t)(1+w_t) c\|\delta Z_t^i\|^2
\,d\langle
M^i\rangle_t\biggr] \nonumber\\[-2pt]
&&\qquad\quad{} + \int_A(x_t^{-1}-\Delta\mu_t)(1+w_t^{-1})E[\|\delta_2 f_t\|
^2]\,d\mu_t.
\nonumber
\end{eqnarray}
Combining~(\ref{eqexpand1}) and~(\ref{eqexpand2}) gives
\begin{eqnarray*}
\int_A dE[\|\delta Y_t\|^2] &\geq& -\int_A E[\|\delta Y_{t-}\|^2]
\,d\upsilon_t + E\biggl[\sum_i\int_A (1-\Delta\upsilon_t)\|\delta Z_t^i\|
^2 \,d\rho_t^i\biggr] \\[-2pt]
&&{} - \int_A E[\|\delta_2f_t\|^2](1-\Delta
\upsilon_t)\,d\pi_t.
\end{eqnarray*}

Let $\phi$ be the signed measure on $\mathcal{B}([0,T])$ defined by
\[
\phi(A) = E\biggl[\sum_i\int_A (1-\Delta\upsilon_t)\|\delta Z_t^i\|^2
\,d\rho_t^i\biggr] - \int_A E[\|\delta_2f_t\|^2](1-\Delta\upsilon_t)\,d\pi_t.
\]
As $d\pi/d\mu$ is bounded, $d\rho^i/d\langle M^i\rangle$ is bounded, $\|
\delta_2f\|^2$ is $\mu$-integrable and \mbox{$\delta Z_t \in H^2_M$}, it
follows that $\phi(A)$ is bounded. We see then that $\phi$ is a signed
Stieltjes measure, and we equate it with its distribution function $\phi
_t:=\phi([0,t])$.

Therefore, as $\Delta\upsilon_t <1$, $\Delta\mu-x^{-1}\leq0$, an
application of Lemma~\ref{lemintegratingfactorbound} yields
\begin{eqnarray*}
\int_A d[E[\|\delta Y_t\|^2] \mathfrak{E}(\tilde\upsilon_t)]
&\geq&\int_A(1-\Delta\upsilon_t)^{-1}\mathfrak{E}(\tilde\upsilon
_{t-})\,d\phi_t \\
&=& E\biggl[\sum_i\int_A\mathfrak{E}(\tilde\upsilon_{t-})\|\delta Z_t^i\|
^2 \,d\rho_t^i\biggr]\\
&&{}- \int_A\mathfrak{E}(\tilde\upsilon_{t-})E[\|\delta
_2f_t\|^2]\,d\pi_t.
\end{eqnarray*}
For $A= \ ]t,T]$, it follows that
\begin{eqnarray*}
&&E[\|\delta Y_t\|^2] \mathfrak{E}(\tilde\upsilon_t) +E\biggl[ \sum_i\int
_{]t,T]}\mathfrak{E}(\tilde\upsilon_{s-})\|\delta Z_s^i\|^2 \,d\rho
_s^i\biggr]\\
&&\qquad \leq E[\|\delta Q\|^2] \mathfrak{E}(\tilde\upsilon_T) + \int
_{]t,T]} E[\|\delta_2f_s\|^2]\mathfrak{E}(\tilde\upsilon_{s-})\,d\pi_s,
\end{eqnarray*}
which is the desired inequality~(\ref{eqBSDEBound1}). Taking a
left-limit in $t$ gives, by the dominated convergence theorem,
\begin{eqnarray*}
&&E[\|\delta Y_{t-}\|^2] \mathfrak{E}(\tilde\upsilon_{t-}) + E\biggl[
\sum_i\int_{[t,T]}\mathfrak{E}(\tilde\upsilon_{s-})\|\delta Z_s^i\|^2
\,d\rho_s^i\biggr]\\
&&\qquad \leq E[\|\delta Q\|^2] \mathfrak{E}(\tilde\upsilon_T) + \int
_{[t,T]} E[\|\delta_2f_s\|^2]\mathfrak{E}(\tilde\upsilon_{s-})\,d\pi_s,
\end{eqnarray*}
and so by integration and Fubini's theorem, we have that
\begin{eqnarray*}
&&\int_{]0,T]} E[\|\delta Y_{t-}\|^2] \mathfrak{E}(\tilde\upsilon
_{t-})\,d \mu_t + E\biggl[ \sum_i\int_{]0,T]}\mu_s\mathfrak{E}(\tilde
\upsilon_{s-})\|\delta Z_s^i\|^2 \,d\rho_s^i\biggr]\\
&&\qquad \leq\mu_T E[\|\delta Q\|^2] \mathfrak{E}(\tilde\upsilon_T) +
\int_{]0,T]}\mu_s E[\|\delta_2f_s\|^2]\mathfrak{E}(\tilde\upsilon
_{s-})\,d\pi_s.
\end{eqnarray*}
\upqed\end{pf}
\begin{lemma} \label{lemsimplestBSDE}
Let $F\dvtx\Omega\times[0,T] \to\mathbb{R}^K$ be a predictable
progressively measurable function such that
\[
E\biggl[\int_{]0,T]} \|F(\omega, t)\|^2 \,d\mu\biggr] <+\infty.
\]
Then the BSDE
\[
Y_t - \int_{]t,T]} F(\omega, u) \,d\mu+ \sum_i \int_{]t,T]} Z^i_u\,dM^i_u= Q
\]
has a unique solution in $S^2\times H^2_M$ for any $Q\in L^2(\mathbb
{R}^K;\mathcal{F}_T)$. (Note here that~$F$ does not depend on $Y$ or $Z$.)
\end{lemma}
\begin{pf}
Using Theorem~\ref{thmmartrep}, we first construct the processes $Z^i$
which give a representation of the square integrable martingale
\[
\sum_i \int_{]0,t]} Z^i_u \,dM^i_u = E\biggl[ Q + \int
_{]0,T]}F(\omega, u) \,d\mu\Big|\mathcal{F}_t\biggr].
\]
This can clearly be done componentwise, and so we obtain a unique
process $Z\in H^2_M$, that is, $Z^i_s(\omega) \in\mathbb{R}^K$.
It follows that
\begin{eqnarray*}
\sum_i \int_{]t,T]}Z^i_u \,dM^i_u &=& Q + \int_{]0,T]}F(\omega, u) \,d\mu-
E\biggl[ Q + \int_{]0,T]}F(\omega, u) \,d\mu\Big|\mathcal
{F}_t\biggr]\\
&=& Q + \int_{]t,T]}F(\omega, u) \,d\mu- E\biggl[ Q + \int
_{]t,T]}F(\omega, u) \,d\mu\Big|\mathcal{F}_t\biggr],
\end{eqnarray*}
and so there is an adapted process
%
\begin{eqnarray}\label{eqsimpbsdesoln}
Y_t:\!&=& E\biggl[ Q + \int_{]t,T]}F(\omega, u) \,d\mu\Big|\mathcal
{F}_t\biggr] \nonumber\\[-8pt]\\[-8pt]
&=& Q + \int_{]t,T]}F(\omega, u) \,d\mu-\sum_i \int
_{]t,T]}Z^i_u \,dM^i_u,\nonumber
\end{eqnarray}
which satisfies the BSDE. By uniqueness of the right-hand side of (\ref
{eqsimpbsdesoln}), this process is unique up to indistinguishability
and hence in $S^2$.
\end{pf}
\begin{lemma}\label{lemboundepsilon}
Let $\nu\dvtx[0,T]\to\mathbb{R}$ be a nondecreasing c\`adl\`ag function of
finite variation and $c_{(\cdot)}\dvtx[0,T]\to\mathbb{R}$ be a nonnegative
bounded measurable function. Then $c_t\Delta\nu_t = \sup_{s\in[0,T]}\{
c_s\Delta\nu_s\}$ for some $t$; that is $c_t\Delta\nu_t$ attains its
maximum. Consequently, if, for some $k\in\mathbb{R}$, $c_t\Delta\nu_t
<k$ for all $t$, then there exists an $\varepsilon>0$ such that $c_t\Delta
\nu_t<k-\varepsilon$ for all $t$.
\end{lemma}
\begin{pf}
If $c_t\Delta\nu_t\equiv0$, then the result is trivial. Let $c$ be the upper
bound of $c_{(\cdot)}$. As $\nu$ is right-continuous, it has at most countably
many jumps. Then, as $\nu$ is nondecreasing, $\sum_t c_t\Delta\nu_t\leq
c(\sum_t \Delta\nu_t)\leq c\nu_T<\infty$. Therefore,
$c_t\Delta
\nu_t$ is a summable sequence, and hence has finitely many values
greater than
or equal to $\delta$, for any $\delta>0$. Let $\delta\in\ ]0,
c_t\Delta\nu_t]$
for some $t$, and so $\{c_t \Delta\nu_t\dvtx c_t \Delta\nu_t\geq\delta\}$
is a
finite nonempty set, and therefore has a maximum.

Now suppose $c_t\Delta\nu_t<k$ for all $t$. Let $t^*$ be the value at
which $c_t\Delta\nu_t$ attains its maximum, hence $c_{t^*}\Delta\nu
_{t^*}<k$. For any $\varepsilon< k-c_{t^*}\Delta\nu_{t^*}$ the result
then holds.
\end{pf}
\begin{pf*}{Proof of Theorem~\ref{thmBSDEExist}}
We consider constructing a sequence of approximations in the usual way. For
a BSDE with driver $F$ and terminal condition $Q$, we fix an initial
approximation $(Y^0, Z^{(0)})\in S^2\times H^2_M$. (Note that we denote
by $Z^{(n)}$ the $n$th approximation of the infinite-dimensional
process $Z$, to
distinguish it from~$Z^i$, the $i$th component of $Z$.) We shall
first allow the $Z$ component of the solution to converge, then allow
the $Y$
component to do likewise. This two-stage approach is needed due to the
difference in the Lipschitz coefficients of $F$ with respect to $Y$ and
$Z$. We
shall assume, without loss of generality, that the Lipschitz
coefficient of $F$
(with respect to $Z$) satisfies $c>0$ uniformly.

\subsection*{Step 1: BSDEs where the driver has $Y$ fixed}

To construct the $Z$ solutions, we first fix some c\`adl\`ag process
$\tilde Y \in H^2_\mu$. We wish to define a sequence of approximations
of solutions to the BSDE with driver $F(\cdot, \cdot, \tilde Y_{\cdot
-}, \cdot)$.

For any approximation $Z^{(n)}$, we fix the driver $F^n(\omega,
t)=F(\omega, t, \tilde Y_{t-}, Z^{(n)}_t)$. Using Lemma \ref
{lemsimplestBSDE}, we obtain a new approximation $(Y^{n+1},
Z^{(n+1)})$. We shall show that the induced map $Z^{(n)}\mapsto
Z^{(n+1)}$ is a contraction, and hence that a unique limit exists.

Suppose\vspace*{1pt} at the $n$th stage we have two approximations $(Y^{n,1},
Z^{(n,1)})$ and $(Y^{n,2}, Z^{(n,2)})$ of the solution of a BSDE with
terminal value $Q$ and driver $F(\cdot, \cdot, \tilde Y_{\cdot-}, \cdot
)$. We can hence construct new approximations $(Y^{n+1,1},
Z^{(n+1,1)})$ and $(Y^{n+1,2}, Z^{(n+1,2)})$. We consider the difference
\[
\bigl(\delta Y^{n+1}, \delta Z^{(n+1)}\bigr) = \bigl(Y^{n+1,1}-Y^{n+1,2},
Z^{(n+1,1)}-Z^{(n+1,2)}\bigr).
\]
Note that $(Y^{n+1,1}, Z^{(n+1,1)})$ comes from a BSDE with driver
$F(\cdot, \cdot, \tilde Y_{\cdot-}, Z^{(n,1)}_\cdot)$ which does not
depend on the solutions $(Y^{n+1,1}, Z^{(n+1,1)})$. Hence, for
appropriate functions $x_\cdot$ and $w_\cdot$, the differences $(\delta
Y^{n+1}, \delta Z^{(n+1)})$ satisfy our estimate~(\ref{eqBSDEBound1}), with
\[
\delta_2f_s = F\bigl(\omega, s, \tilde Y_{s-}, Z^{(n,1)}_s\bigr) - F\bigl(\omega, s,
\tilde Y_{s-}, Z^{(n,2)}_s\bigr)
\]
and $\delta Q = 0$, and
when defining $\upsilon$ and $\rho^i$ in~(\ref{equppirhodefn}) we
can take
$c_s=c=0$.

We take the values $w_t\!=\!1,
x_t^{-1}\! =\! \frac{1}{4c} \!+\! \Delta\mu_t$, and so we see that
\mbox{$\Delta\mu_t\!-\!x_t^{-1}\! \leq\!0$},
\[
\upsilon_t = \int_{]0,t]} x_s\,d\mu_s = \int_{]0,t]} \frac{4c}{1+4c\Delta
\mu_s}\,d\mu_s \leq4c\mu_t
\]
is nondecreasing and bounded (and hence of finite variation) and
\[
\Delta\upsilon_t = \frac{4c\Delta\mu_t}{1+4c\Delta\mu_t} \leq1-\frac
{1}{1+4c}<1.
\]
It follows that the integrands in~(\ref{equppirhodefn}) are bounded,
our estimate~(\ref{eqBSDEBound1}) holds and $\mathfrak{E}(\tilde
\upsilon_{s-})(1-\Delta\upsilon_s)^{-1}$ is strictly positive and
bounded. Hence
\[
Z\mapsto E\biggl[\sum_i\int_{]0,T]} \|Z_s^i\|^2 \mathfrak{E}(\tilde
\upsilon_{s-}) (1-\Delta\upsilon_s)^{-1}\,d\langle M^i\rangle_s\biggr]
\]
is an equivalent norm on $H^2_{M}$.

As we can take $c=0$ in~(\ref{equppirhodefn}), we have the simplification
\[
d\rho^i_t=(1-\Delta\upsilon_t)^{-1}\,d\langle
M^i\rangle_t=(1-x_t\Delta\mu_t)^{-1}\,d\langle M^i\rangle_t.
\]
From
(\ref{eqBSDEBound1}) we obtain
\begin{eqnarray*}
&&E\biggl[ \sum_i\int_{]t,T]}\bigl\|\bigl(\delta Z^{(n+1)}\bigr)_s^i\bigr\|^2 \mathfrak
{E}(\tilde
\upsilon_{s-})(1-\Delta\upsilon_s)^{-1}\,d\langle M^i\rangle_t\biggr] \\
&&\qquad \leq \int_{]t,T]} E[\|\delta_2f_s\|^2][(x_s^{-1} - \Delta\mu
_s)(1+w_s^{-1})]\mathfrak{E}(\tilde\upsilon_{s-}) (1-\Delta\upsilon
_s)^{-1}\,d\mu_s\\
&&\qquad = \int_{]t,T]} E[\|\delta_2f_s\|^2]\biggl[\frac{1}{2c}
\biggr]\mathfrak{E}(\tilde\upsilon_{s-}) (1-\Delta\upsilon_s)^{-1}\,d\mu_s.
\end{eqnarray*}

By the Lipschitz continuity of the original driver, we have
\[
E[\|\delta_2f_s\|^2] \leq cE\bigl[\bigl\|\delta Z^{(n)}_s\bigr\|^2_{M_s}\bigr],
\]
and so, for our chosen values of $w_t$ and $x_t$, using inequality (\ref
{eqisometry}),
\begin{eqnarray*}
&&E\biggl[ \sum_i\int_{]0,T]}\bigl\|\bigl(\delta Z^{(n+1)}\bigr)_s^i\bigr\|^2
\mathfrak {E}(\tilde
\upsilon_{s-})(1-\Delta\upsilon_s)^{-1}\,d\langle M^i\rangle_t\biggr] \\
&&\qquad \leq \frac{1}{2}\int_{]0,T]}E\bigl[\bigl\|\delta Z^{(n)}_s\bigr\|^2_{M_s}\bigr]
\mathfrak{E}(\tilde\upsilon_{s-})(1-\Delta\upsilon_s)^{-1}\,d\mu_s\\
&&\qquad \leq \frac{1}{2}E\biggl[ \sum_i\int_{]0,T]}\bigl\|\bigl(\delta
Z^{(n)}\bigr)_s^i\bigr\|^2
\mathfrak{E}(\tilde\upsilon_{s-})(1-\Delta\upsilon_s)^{-1}\,d\langle
M^i\rangle_t\biggr].
\end{eqnarray*}

By completeness, the contraction mapping principle gives the existence
of a unique limit $Z\in H^2_M$ solving the BSDE with driver $F(\cdot,
\cdot, \tilde Y_{\cdot-}, \cdot)$ and terminal value $Q$. (The solution
$Y$ process can, of course, be found using Lemma \ref
{lemsimplestBSDE}, fixing the $Z$ process at the constructed limit.)

\subsection*{Step 2: BSDEs with general drivers}

We now construct a convergent sequence of approximations in $Y$ for a
general driver. Consider the Lipschitz bounds of the original driver
$F$. Without
loss of generality, we assume that $c>0$ uniformly. As $c_s\Delta\mu_s<1$,
$\mu$ is nondecreasing and of finite variation, and $c_s$ is bounded,
Lem\-ma~\ref{lemboundepsilon} yields a fixed $\varepsilon>0$ such that $c_s\Delta
\mu_s <
1-\varepsilon$.

Let\vspace*{-2pt}
\begin{eqnarray*}
x_t^{-1} &=& \frac{1}{c(1+2\varepsilon^{-1})} +\Delta\mu_t,\\[-3pt]
w_t^{-1} &=& \frac{\varepsilon}{2}+\frac{\varepsilon^2}{8-4\varepsilon}.\vspace*{-2pt}
\end{eqnarray*}

As\vspace*{-2pt}
\[
x_t^{-1} -\Delta\mu_t = \frac{1}{c(1+2\varepsilon^{-1})} < \frac{1}{c(1+w_t)},\vspace*{-2pt}
\]
it is clear that
\[
\frac{d\rho_t^i}{d\langle M^i\rangle_t} = [1-(x_t^{-1} - \Delta\mu
_t)(1+w_t)c] (1-x_t\Delta\mu_t)^{-1}> 0,\vspace*{-2pt}
\]
so $\rho^i$ is a nonnegative measure for each $i$.

For any terminal value $Q$, consider an approximation $Y^n \in S^2$. We
can then
construct a solution $(Y^{n+1}, Z^{(n+1)})$ to the BSDE with driver
$F^n(\omega, t, z)=F(\omega,t, Y^n_{t-}, z)$, using the above result.
Again we shall show that $Y^n\mapsto Y^{n+1}$ is a contraction, and
hence that a unique limit exists.

As above, we consider the sequence of differences $(\delta Y^n, \delta
Z^{(n)})$ from two initial approximations. As $Y^{n+1,1}$ is defined
using\vspace*{1pt} the driver $F^n=F(\omega,t,Y^{n,1}_{t-},\break  z)$, which does not
depend on $Y^{n+1,1}$, we can take $c_s=0$
when defining~$\rho^i$ in~(\ref{equppirhodefn}).

Hence, for our chosen values of $x_t$ and $w_t$, we can
again easily verify that the integrands in~(\ref{equppirhodefn}) are bounded,
and the resulting $\upsilon$ is nonnegative, bounded and $\Delta\upsilon
<1$. It
follows that $\mathfrak{E}(\tilde\upsilon_s)$ is strictly positive and bounded.

Considering the difference of any two approximations $\delta Y^n$, by
the Lipschitz continuity of the original driver, we have
\[
E[\|\delta_2f_s\|^2] \leq c_sE[\|\delta Y^{n}_{s-}\|^2],\vspace*{-2pt}
\]
so, as $\rho^i$ is a family of nonnegative measures, our estimate (\ref
{eqBSDEBound2}) gives
\begin{eqnarray*}
&&\int_{]0,T]} E[\|\delta Y_{t-}^{n+1}\|^2] \mathfrak{E}(\tilde\upsilon
_{t-})\,d\mu_t \\[-3pt]
&&\qquad \leq\int_{]0,T]}\mu_s E[\|\delta_2f_s\|^2]\mathfrak{E}(\tilde
\upsilon_{s-})\,d\pi_s\\[-3pt]
&&\qquad \leq\int_{]0,T]}E[\|\delta Y^{n}_{s-}\|^2]\mathfrak{E}(\tilde
\upsilon_{s-})\mu_s c_s[(x_s^{-1} - \Delta\mu
_s)(1+w_s^{-1})]\\[-3pt]
&&\hphantom{\int_{]0,T]}}\qquad\quad{}\times(1-x_s\Delta\mu_s)^{-1}\,d\mu_s.\vspace*{-2pt}
\end{eqnarray*}

By construction we have
\begin{eqnarray*}
&&\mu_s c_s[(x_s^{-1} - \Delta\mu_s)(1+w_s^{-1})](1-x_s\Delta\mu
_s)^{-1}\\[-2pt]
&&\qquad =\mu_s c_sx_s^{-1}(1+w_s^{-1})\\[-2pt]
&&\qquad = \mu_s c_s\biggl(\frac{1}{c(1+2\varepsilon^{-1})} + \Delta\mu
_s\biggr)(1+w_s^{-1})\\[-2pt]
&&\qquad \leq\mu_s\biggl(\frac{c_s}{c(1+2\varepsilon^{-1})} + 1-\varepsilon
\biggr)\biggl(1+\frac{\varepsilon}{2}+\frac{\varepsilon^2}{8-4\varepsilon}
\biggr)\\[-2pt]
&&\qquad \leq\biggl(1-\frac{\varepsilon}{2}\biggr)\biggl(1+\frac{\varepsilon
}{2}+\frac{\varepsilon^2}{8-4\varepsilon}\biggr)\\[-2pt]
&&\qquad = 1-\frac{\varepsilon^2}{8},
\end{eqnarray*}
where the fifth line is because $\mu_s\leq\mu_T\leq1$ and
\[
\frac{c_s}{c} \leq1 < 1+\frac{\varepsilon}{2} = \frac{\varepsilon}{2}
(1+2\varepsilon^{-1}).
\]

We then have
\begin{eqnarray*}
\int_{]0,T]} E[\|\delta Y_{t-}^{n+1}\|^2] \mathfrak{E}(\tilde\upsilon
_{t-})\,d\mu_t \leq\biggl(1-\frac{\varepsilon^2}{8}\biggr)\int_{]0,T]}E[\|
\delta Y^{n}_{s-}\|^2]\mathfrak{E}(\tilde\upsilon_{s-})\,d\mu_s.
\end{eqnarray*}
As $\mathfrak{E}(\tilde\upsilon_{s-})$ is strictly positive and
bounded, $\int_{]0,T]} E[\|\cdot\|^2]\mathfrak{E}(\tilde\upsilon
_{s-})\,d\mu_s$ is an equivalent norm on $H^2_\mu$. By completeness, the
contraction mapping principle gives the existence of a limit $Y^\infty
_t = \lim_{n\to\infty} Y^n_{t-}$, which is unique in $H^2_\mu$. We also
have the existence of a limit $Z$, as $d\rho^i_t/d\langle M^i\rangle_t$
is strictly positive, and from~(\ref{eqBSDEBound2}),
\begin{eqnarray*}
&&\lim_{n\to\infty} E\biggl[ \sum_i\int_{]0,T]}\mu_s\mathfrak{E}(\tilde
\upsilon_{s-})\bigl\|\bigl(\delta Z^{(n)}\bigr)_s^i\bigr\|^2 \,d\rho_s^i\biggr]\\[-2pt]
&&\qquad \leq
\lim_{n\to\infty}\int_{]0,T]}\mu_s E[\|\delta_2f_s^n\|^2]\mathfrak
{E}(\tilde
\upsilon_{s-})\,d\pi_s =0;
\end{eqnarray*}
that is, $\delta Z^{(n)}$ also converges to zero in $H^2_M$.

We take the right limits of a left-continuous version of the process
$Y^\infty$, namely
\begin{eqnarray*}
Y_t &=& E\biggl[Q+\int_{]t,T]} F(\omega, s, Y^{\infty}_{s}, Z_s)\,d\mu
_s\Big|\mathcal{F}_t\biggr] \\[-2pt]
&=& E\biggl[Q+\int_{]t,T]} F(\omega,
s, Y_{s-}, Z_s)\,d\mu_s\Big|\mathcal{F}_t\biggr].
\end{eqnarray*}

By Lemma~\ref{lemYquadbound}, $Y\in S^2$ and by Lemma \ref
{lemH2S2equiv} it is unique in $S^2$.
This pair $(Y,Z)$ will solve the BSDE with driver $F$ and terminal
value $Q$. This limit is unique for $Z\in H^2_M$, as can by seen by
fixing $Y$ and using our earlier result.
\end{pf*}
\begin{remark}
In discrete time, we have shown in~\cite{Cohen2008c} that a necessary
and sufficient condition for the existence\vadjust{\goodbreak} of a solution to the
discrete BSDE is that $F$ is invariant with respect to equivalent $Z$
in \mbox{$\|\cdot\|_{M_t}$} norm, and that $y\to y-F(\omega, t, y, z)$ is a
bijection in $y$ for all $z, t$ and almost all $\omega$. The
requirement that $F$ is firmly Lipschitz is sufficient, but not
necessary, to guarantee that these conditions hold.
\end{remark}

\section{Existence of BSDE solutions: General results}

We now wish to extend our above solution to allow $\mu$ to be any
Stieltjes measure, by relaxing the condition that $\mu_T\leq1$. In so
doing, we shall also weaken slightly the firm Lipschitz requirement.
\begin{lemma}\label{lemsplitupinterval}
Let $\nu$ be a nonnegative Stieltjes measure with $\Delta\nu<1$. Then
there exists an $\eta>0$ and a finite sequence $\{0=t_0< t_1<\cdots<t_B=T\}
$ such that $\nu(]t_i, t_{i+1}])\leq1-\eta$ for all $i$.
\end{lemma}
\begin{pf}
By Lemma~\ref{lemboundepsilon} with $c_t\equiv1$, there exists an
$\eta>0$ with $\Delta\nu<1-\eta$. Let $t_K=T$ for some large $K$.
Define recursively for integers $j<K$, $t_j = \sup\{t\dvtx\nu_t< \nu
_{t_{j+1}} -1 + \eta\}\vee0$. By right continuity, $\nu(]t_j,
t_{j+1}]) = \nu_{t_{j+1}}-\nu_{t_j} \leq1-\eta$. For any $j$, it is
also easy to show that $\nu_{t_{j+2}}-\nu_{t_j}>1-\eta$. Hence, as $\nu
_T$ is finite, the sequence $t_j$ has only finitely many nonzero terms.
Let $k = \max\{j\dvtx t_k=0\}$, let $B=K-k$ and rescale the index of our
sequence accordingly. We then have a sequence with the desired properties.
\end{pf}
\begin{theorem} \label{thmBSDEExist2}
Let $\mu$ be any deterministic Stieltjes measure assigning positive
measure to every open interval. (Note\vspace*{1pt} \mbox{$\|\cdot\|_M$} is still well
defined in relation to $\mu$.) Let $F\dvtx\Omega\times[0,T]\times\mathbb
{R}^K\times\mathbb{R}^{K\times\infty}\to\mathbb{R}^K$ be a predictable,
progressively measurable function such that:
\begin{itemize}
\item$E[\int_{]0,T]} \|F(\omega, t, 0,0)\|^2\,d\mu_t]<+\infty$.
\item There exists a quadratic firm Lipschitz bound on $F$, that is, a
measurable deterministic function $c_t$ uniformly bounded by some $c\in
\mathbb{R}$, such that, for any $y_t, y_t'\in\mathbb{R}^K$, $z_t,
z'_t\in\mathbb{R}^{K\times\infty}$,
\begin{eqnarray*}
&&\|F(\omega, t, y_t, z_t) - F(\omega, t, y'_t, z'_t)\|^2\\
&&\qquad\leq c_t \|
y_t-y'_t\|^2+ c\|z_t-z'_t\|^2_{M_t},\qquad \,d\mu\times d\mathbb{P}\mbox{-a.s.}
\end{eqnarray*}
and
\[
c_t(\Delta\mu_t)^2<1.
\]
Note that the variable bound $c_t$ need only apply to the behavior of
$F$ with respect to $y$.
\end{itemize}
A function\vspace*{1pt} satisfying these conditions will be called \textit{standard}.
Then for any $Q\in L^2(\mathbb{R}^K;\mathcal{F}_T)$, the BSDE (\ref
{eqBSDE}) with driver $F$ has a unique solution $(Y,Z)\in S^2\times
H^2_M$. ($S^2$ and $H^2_M$ are defined in Definition~\ref{defnprocessspaces}.)
\end{theorem}
\begin{pf}
We assume, without loss of generality, that $c\geq1$. By
Lemma~\ref{lemboundepsilon}, as $(\mu_t)^2$ is a nondecreasing c\`adl\`ag
function of
finite variationz\vadjust{\goodbreak} and $c_t(\Delta\mu_t)^2<1$ for all $t$, there exists an
$\varepsilon>0$ such that $c_t(\Delta\mu_t)^2 \leq1-\varepsilon$. Let
\[
\nu_t=\int_{]0,t]} \frac{2(1+\varepsilon^{-1})c}{\varepsilon+2(1+\varepsilon
^{-1})c\Delta\mu_t} \,d\mu_t =:\int_{]0,t]}\lambda_t^{-1} \,d\mu_t.
\]
Then $\nu\sim\mu$, and $\Delta\nu_t = \lambda_t^{-1} \Delta\mu_t <1$.
As $\nu_t$ is right continuous, deterministic and has no jumps of size
equal to or greater than one, by Lemma~\ref{lemsplitupinterval} there
exists a finite sequence $\{t_0=0< t_1<\cdots<t_B=T\}$ such that $\nu
(]t_j, t_{j+1}])\leq1$ for all $j$.

We now note that, omitting the $\omega$ and $t$ arguments, our BSDE
(\ref{eqBSDE}) can be written
%
\begin{equation}\label{eqBSDEdistorted}
Q = Y_t - \int_{]t,T]} \lambda_uF(Y_{u-}, Z_u) \,d\nu_u + \sum
_{i=1}^\infty\int_{]t,T]}Z^i_u\,dM^i_u,
\end{equation}
which is a BSDE in $\nu$ with Lipschitz property
\[
\|\lambda_tF(y_t, z_t) - \lambda_tF(y'_t, z'_t)\|^2\leq\lambda_t^2c_t
\|y_t-y'_t\|^2+ \lambda_t^2c\|z_t-z'_t\|^2_{M_t},\qquad   \,d\nu\times
d\mathbb{P}\mbox{-a.s.}
\]

We write
\[
\bar c = \sup_t \{\lambda_t^2 c\}\leq\biggl(\frac{\varepsilon}{2(1+\varepsilon
^{-1})c}+\mu_T\biggr)^2c<\infty
\]
and $\bar c_t = \lambda_t^2 c_t$. Note that as $\varepsilon<1$, $c_t/c<1$,
%
\begin{eqnarray}\label{eqcbound}
\bar c_t \Delta\nu_t &=& \biggl(\frac{\varepsilon+2(1+\varepsilon^{-1})c\Delta
\mu_t}{2(1+\varepsilon^{-1})c}\biggr)^2c_t \Delta\nu_t\nonumber\\
&\leq&\biggl(\frac{\varepsilon}{2(1+\varepsilon^{-1})c}+\Delta\mu_t
\biggr)^2c_t\nonumber\\
&\leq&(1+\varepsilon^{-1})\frac{\varepsilon^2c_t}{4(1+\varepsilon
^{-1})^2c^2}+(1+\varepsilon)c_t(\Delta\mu_t)^2\nonumber\\[-8pt]\\[-8pt]
&\leq&\frac{\varepsilon^2}{4}+(1+\varepsilon)(1-\varepsilon)\nonumber\\
&\leq&1-\frac{3\varepsilon^2}{4}\nonumber\\
&<&1.
\nonumber
\end{eqnarray}
Finally, we define the measures
\[
\nu^k_t = \int_{]0,t\wedge t_{k+1}]}\biggl(\frac{\eta}{\nu_{t_k}} +
\biggl(1-\frac{\eta}{\nu_{t_k}}\biggr)I_{t>t_k}\biggr) \,d\nu_t.
\]
It is easy then to show that $\nu^k_{t_{k+1}}\leq1$ for all $k$.
Furthermore, as
\[
\frac{d\nu^k}{d\nu} = \frac{\eta}{\nu_{t_k}} I_{t<t_k} + I_{t\in[t_k,
t_{k+1}]}>0,
\]
we see $\nu^k$ assigns positive measure to every interval in
$]0,t_{k+1}]$. Hence, on $]0,t_{k+1}]$, $\nu^k$ is a measure of the
type considered in Theorem~\ref{thmBSDEExist}. Also, $\Delta\nu^k_t
\leq\Delta\nu_t < 1$, and $\nu^k$ agrees with $\nu$ for all subsets of
$]t_k, t_{k+1}]$.

We now consider the sequence of BSDEs
%
\begin{equation} \label{eqBSDEiterating}
Y^{k+1}_{t_{k+1}} = Y^k_t - \int_{]t,t_{k+1}]} \lambda_tF(Y^k_{u-},
Z^k_u) \,d\nu_u^k + \sum_{i=1}^\infty\int_{]t,t_{k+1}]}(Z^k)^i_u\,dM^i_u
\end{equation}
with $Y^{B}_T = Q$. For each $k$,~(\ref{eqBSDEiterating}) is a
standard BSDE with a driver $\lambda_t F$, which has Lipshitz
coefficients of $\bar c_t$ and $\bar c$, and hence is (linearly) firmly
Lipschitz by~(\ref{eqcbound}). Hence, the existence of a unique
solution for each $k$ is guaranteed by Theorem~\ref{thmBSDEExist}.

For $k=B-1$,~(\ref{eqBSDEiterating}) agrees with (\ref
{eqBSDEdistorted}), and hence with the original BSDE~(\ref{eqBSDE}),
for all $t\in[t_k, t_{k+1}]$. It follows that the solution
$Y^{B-1}_{t}$ is a solution to our original BSDE on the interval
$[t_{B-1}, t_B]$. Similarly, for $k=B-2$, this argument then implies
that $Y^{B-2}_t$ is a solution to our original BSDE on the interval
$[t_{B-2}, t_{B-1}]$, etc.

We now piece together these solutions to define $Y_t = Y^k_t$ where
$t\in\ ]t_k, t_{k+1}]$, and similarly for $Z$. By an inductive argument,
we can see that this will solve the desired BSDE. Furthermore, this
solution will be unique, as the solution is unique on each subsection
$]t_k, t_{k+1}]$.
\end{pf}
\begin{remark}
We note that, even when $\mu_T\leq1$, the conditions of Theorem~\ref
{thmBSDEExist2} are strictly weaker than those of Theorem \ref
{thmBSDEExist}. In this case, the jumps of $\mu$ satisfy $\Delta\mu
\leq1$, and it follows that a quadratic firm Lipschitz bound is weaker
than a linear firm Lipschitz bound.
\end{remark}
\begin{remark}
Clearly if $\Delta\mu=0$, then the requirement that $F$ is firmly
Lipschitz degenerates into the classical requirement that $F$ is
uniformly Lipschitz. It is to be expected that many of the
generalisations of the Lipschitz conditions which are known in the case
where our filtration is generated by a Brownian motion, that is, to
drivers with a stochastic Lipschitz bound, to drivers with quadratic
growth, to drivers with linear growth and a monotonicity condition,
etc., will also be possible in this situation. There is, however,
considerable difficulty involved in obtaining these results in the
simple continuous case, and it is to be expected that this difficulty
will be increased by the discontinuities present here.
\end{remark}
\begin{remark}
The situation where $F$ has stochastic Lipschitz bounds is of
particular interest here, as it would then be possible to consider
replacing~$\mu$ with a general predictable process of finite variation,
and consequently, with any square integrable special semimartingale.
Such a general situation is arguably as general as can be expected
within the context of stochastic integration.
\end{remark}

\section{A comparison theorem}
Given we have now established the existence of solutions to these
equations, we now wish to prove a comparison theorem for them. This is
based on the theorem in~\cite{Cohen2009}, for BSDEs of the type of~(\ref
{eqBSDEworthogonal}).\looseness=1

\begin{theorem}[(Comparison theorem)]\label{thmCompThm}
Suppose we have two BSDEs corresponding to standard coefficients and
terminal values $(F, Q)$ and $(\bar F, \bar Q)$. Let $(Y, Z)$ and
$(\bar Y, \bar Z)$ be the associated solutions. Suppose that for some
$s$, the following conditions hold:
\begin{longlist}[(iii)]
\item$Q\geq\bar Q$ $\mathbb{P}$-a.s.;
\item$\mu\times\mathbb{P}$-a.s. on $[s,T]\times\Omega$,
\[
F(\omega, u, \bar Y_{u-}, \bar Z_u) \geq\bar F(\omega, u, \bar Y_{u-},
\bar Z_u);
\]
\item for each $j$, there exists a measure $\tilde{\mathbb{P}}_j$
equivalent to $\mathbb{P}$ such that the $j$th component of $X$, as
defined for $r\geq s$ by
\begin{eqnarray*}
e_j^*X_r &:=& -\int_{]s,r]} e_j^*[F(\omega, u, \bar Y_{u-},
Z_u) - F(\omega, u, \bar Y_{u-}, \bar Z_u)]\,d\mu_u \\
&&{} + \sum_i\int
_{]s,r]}e_j^*[Z^i_u -\bar Z^i_u]\,dM_u^i
\end{eqnarray*}
is a $\tilde{\mathbb{P}}_j$ supermartingale on $[s,T]$;
\item if, for all $r\in[s,T]$,
\begin{eqnarray*}
&&e_i^*Y_r -E_{\tilde{\mathbb{P}}_i}\biggl[\int
_{]r,t]}e_i^*F(\omega, u, Y_{u-}, Z_u)\,d\mu_u\Big|\mathcal{F}_r\biggr]
\\
&&\qquad \geq e_i^*\bar Y_r-E_{\tilde{\mathbb{P}}_i}\biggl[\int
_{]r,t]}e_i^*F(\omega, u, \bar Y_{u-}, Z_u)\,d\mu_u\Big|\mathcal
{F}_r\biggr]
\end{eqnarray*}
for all $i$, then $Y_r\geq\bar Y_r$ for all $r\in[s,t]$ componentwise.

\end{longlist}
It is then true that $Y \geq\bar Y$ on $[s,T]$, except possibly on
some evanescent set.
\end{theorem}
\begin{pf}
We omit the $\omega$ and $t$ arguments of $F$ for clarity.

Then, for $r\in[s,T]$
%
\begin{eqnarray} \label{eqexpandBSDE}
&&Y_r-\bar Y_r - \int_{]r,T]} [F(Y_{u-}, Z_u) - \bar F(\bar Y_{u-}, \bar
Z_{u})]\,d\mu_u\nonumber\\
&&\quad{} + \sum_i\int_{]r,T]} [Z_u^i-\bar
Z_u^i]\,dM_u^i\\
&&\qquad=
Y_\tau-\bar Y_\tau\geq0.\nonumber
\end{eqnarray}

This can be rearranged to give
%
\begin{eqnarray}\label{eqsplitup}
&&Y_r - \bar Y_r - \int_{]r,T]} [F(Y_{u-}, Z_{u})-F(\bar Y_{u-},
Z_{u})]\,d\mu_u\nonumber\\
&&\qquad\geq\int_{]r,T]}[F(\bar Y_{u-}, \bar Z_{u})-\bar F(\bar Y_{u-}, \bar
Z_{u})]\,d\mu_u \nonumber\vadjust{\goodbreak}\\[-8pt]\\[-8pt]
&&\qquad\quad{} +\int_{]r,T]}[F(\bar Y_{u-}, Z_{u})-F(\bar Y_{u-}, \bar
Z_{u})]\,d\mu_u\nonumber\\
&&\qquad\quad{}-\sum_i\int_{]r,T]} [Z_u^i-\bar Z_u^i]\,dM_u^i.
\nonumber
\end{eqnarray}

We have that
\[
\int_{]r,T]}[F(\bar Y_{u-}, \bar Z_{u})-\bar F(\bar Y_{u-}, \bar
Z_{u})]\,d\mu_u\geq0
\]
by assumption (ii). As $e_j^*X_r$ is a $\tilde{\mathbb{P}}_j$
supermartingale, we know that the process given by
%
\begin{eqnarray}\label{eqXtildeDefn}
e_j^*\tilde X_{r} :\!&=&e_j^*X_{r}-E_{\tilde{\mathbb
{P}}_i}[e_j^*X_T|\mathcal{F}_r]\nonumber\\
&=& E_{\tilde{\mathbb{P}}_j}\biggl[\int_{]r,T]} e_j^*[F(\bar Y_{u-}, Z_u)
- F(\bar Y_{u-},\bar Z_u)]\,d\mu_u\\
&&\hspace*{56.1pt}{}  - \sum_i\int_{]r,T]}e_j^*[Z^i_u -\bar Z^i_u]\,dM_u^i\Big|\mathcal
{F}_r\biggr]
\nonumber
\end{eqnarray}
is also a $\tilde{\mathbb{P}}_j$-supermartingale, with $e_j^*\tilde
X_T=0$ $\tilde{\mathbb{P}}_j$-a.s. Hence $e_j^*\tilde X_r\geq0$.

For each $j$, taking a $\tilde{\mathbb{P}}_j|\mathcal{F}_r$ conditional
expectation throughout~(\ref{eqsplitup}) and premultiplying by $e_j^*$ gives
\[
e_j^*Y_r - e_j^*\bar Y_r - E_{\tilde{\mathbb{P}}_j}\biggl[\int
_{]r,T]} e_j^*[F(Y_{u-}, Z_u)-F(\bar Y_{u-}, Z_u)]\,d\mu_u\Big|\mathcal
{F}_r\biggr]\geq0.
\]
This must hold for all $r\in[s,T]$ and almost all $\omega$. By
assumption (iv), for almost all $\omega$, it follows that the
comparison $Y_r\geq\bar Y_r$ must hold for all $r\in[s,T]$.

As $Y-\bar Y$ is c\`adl\`ag, we have that $Y-Y$ is indistinguishable
from a~nonnegative process and, therefore, the inequality holds up to
evanescence.
\end{pf}
\begin{remark}
Assumption (iv) is clearly trivial whenever $F$ does not depend on $Y$.
\end{remark}
\begin{remark}
Assumption (iii) is very closely related to the \textit{Fundamental
theorem of asset pricing} (see~\cite{Delbaen2006}), as it relates an
inequality in current values to the existence of an equivalent
(super-)martingale measure.\vadjust{\goodbreak}
\end{remark}
\begin{corollary}
If assumption \textup{(iv)} holds for any $T$ whenever $s\geq T-\varepsilon$ for
some fixed $\varepsilon$, then the comparison also holds.
\end{corollary}
\begin{pf}
In this\vspace*{1pt} case, we can show that the comparison holds on
$[T-\varepsilon,\break
T]$. We can then replace $T$ with $T-\varepsilon$ throughout the theorem,
replacing~$Q$ and $\bar Q$ with $Y_{T-\varepsilon}$ and $\bar Y_{T-\varepsilon
}$ in assumption (i). It is clear that assumptions~(ii) and (iii) will
continue to hold, with the same choice of measures $\tilde{\mathbb
{P}}_i$. By the statement of the corollary, assumption (iv) will then
hold on the interval $[T-2\varepsilon, T-\varepsilon]$. By induction, it
follows that the comparison holds on $[T-n\varepsilon, T]$ for all $n\in
\mathbb{N}$. For $n$ sufficiently large, this implies the comparison
holds on $[s,T]$ as desired.
\end{pf}
\begin{definition}
A standard driver $F$ such that assumptions (iii)
and~(iv) of Theorem~\ref{thmCompThm} hold on $[0,T]$ for all $Y, \bar
Y \in S^2$ and $Z, \bar Z\in H^2_M$ will be called \textit{balanced}.
\end{definition}
\begin{theorem}\label{thmScalarsimple}
In the scalar ($K=1$) case, assumption \textup{(iv)} of Theorem~\ref
{thmCompThm} holds for any standard $F$.
\end{theorem}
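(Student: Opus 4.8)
The plan is to exploit two features special to the scalar case: that the $y$-increment of $F$ linearises into a single bounded coefficient, and that a sign comparison is then available. Write $\delta Y = Y-\bar Y$ and $\delta F_u = F(\omega,u,Y_{u-},Z_u)-F(\omega,u,\bar Y_{u-},Z_u)$, so that (taking $z=z'$ in the firm Lipschitz bound) $|\delta F_u|\le \sqrt{c_u}\,|\delta Y_{u-}|\le\sqrt c\,|\delta Y_{u-}|$; set $a_u:=\delta F_u/\delta Y_{u-}$ where $\delta Y_{u-}\neq 0$ and $a_u:=0$ otherwise, so that $\delta F_u=a_u\delta Y_{u-}$ with $|a_u|\le\sqrt{c_u}$. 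In the scalar case the index $j$ is trivial, and Assumption (iv) of Theorem \ref{thm:CompThm} reduces to the following implication: given $\delta Y_t\ge 0$, the inequalities $\delta Y_r\ge E_{\tilde{\mathbb{P}}}[\int_{]r,t]}a_u\delta Y_{u-}\,d\mu_u\mid\mathcal F_r]$, holding for all $r\in[t-\epsilon,t]$, force $\delta Y_r\ge 0$ on $[t-\epsilon,t]$.

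First I would pin down the behaviour at the left end point $t$, which is exactly where the jump hypothesis $c_t(\Delta\mu_t)^2<1$ enters. Letting $r\uparrow t$, and using that $\mu$ is deterministic with summable jumps so that $\mu(]r,t[)\to 0$, dominated convergence gives $\int_{]r,t]}a_u\delta Y_{u-}\,d\mu_u\to a_t\delta Y_{t-}\Delta\mu_t$ in $L^1(\tilde{\mathbb{P}})$. Since this limit is $\mathcal F_{t-}$-measurable and $\mathcal F_r\uparrow\mathcal F_{t-}$, a Hunt-type convergence of the conditional expectations yields $\delta Y_{t-}\ge a_t\Delta\mu_t\,\delta Y_{t-}$ almost surely. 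On $\{\delta Y_{t-}<0\}$ this reads $\delta Y_{t-}(1-a_t\Delta\mu_t)\ge 0$; but $|a_t\Delta\mu_t|\le\sqrt{c_t}\,\Delta\mu_t<1$ forces $1-a_t\Delta\mu_t>0$, a contradiction. Hence $\delta Y_{t-}\ge 0$ $\tilde{\mathbb{P}}$-a.s., and, $\tilde{\mathbb{P}}$ being equivalent to $\mathbb{P}$, also $\mathbb{P}$-a.s.

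With $\delta Y_{t-}\ge 0$ and $\delta Y_t\ge 0$ secured, I would choose $\epsilon=\epsilon(\omega)$ small and close the argument on $[t-\epsilon,t]$. On $\{\delta Y_{t-}>0\}$ the c\`adl\`ag property keeps $\delta Y$ strictly positive on an entire left neighbourhood, so the conclusion is automatic. On the remaining set the continuous part of the estimate is handled by linearity: introduce the integrating factor $\mathfrak E(\tilde\nu)$ for $\nu=\int a\,d\mu$, which is strictly positive and bounded away from zero by Lemmas \ref{lem:expproperties} and \ref{lem:expinfpositive} (again because $\Delta\nu_u=a_u\Delta\mu_u>-1$), and run the linear comparison in the spirit of Lemmas \ref{lem:integratingfactorbound} and \ref{lem:groenwallinequality}; over an interval on which $\mu(]t-\epsilon,t[)$ is as small as we please, this propagates the nonnegativity from $t$ leftwards and gives $\delta Y_r\ge 0$ throughout $[t-\epsilon,t]$.

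I expect the main obstacle to be the passage to the limit in the conditional expectations at $t$ — the Hunt-type step, in which the integrand converges while the conditioning $\sigma$-fields simultaneously increase to $\mathcal F_{t-}$ — together with the jump bookkeeping at the atom $t$ through the jump-inversion identities. The genuinely delicate case is $\delta Y_{t-}=0$, where one must rule out negative excursions of $\delta Y$ accumulating at $t^-$ that are \emph{supported} by the positive part of $\delta Y$; it is precisely here that the scalar linearisation (replacing the unsigned Lipschitz bound by the exact coefficient $a_u$) and the freedom to take $\epsilon$ depending on $\omega$ are indispensable, and it is the step I would treat with the greatest care.
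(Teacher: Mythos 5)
Your opening moves are sound: the scalar linearisation $\delta F_u = a_u\,\delta Y_{u-}$ with $|a_u|\le \sqrt{c_u}$ is legitimate, and your endpoint argument --- passing $r\uparrow t$ in the hypothesis of (iv) to get $(1-a_t\Delta\mu_t)\,\delta Y_{t-}\ge 0$ and hence $\delta Y_{t-}\ge 0$ from $\sqrt{c_t}\,\Delta\mu_t<1$ --- is a correct observation (modulo the $\tilde{\mathbb{P}}$-integrability needed to justify the $L^1$/Hunt-type passage, which is a real but minor wrinkle). The problem is that the decisive step is missing. On the set $\{\delta Y_{t-}=0\}$ you must exclude negative values of $\delta Y$ in $]t-\epsilon,t[$, and your proposed mechanism --- an integrating factor $\mathfrak{E}(\tilde\nu)$ for $\nu=\int a\,d\mu$ plus a Gr\"onwall comparison --- cannot close. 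First, the hypothesis of (iv) is \emph{one-sided}: it bounds $\delta Y_r$ below by $E_{\tilde{\mathbb{P}}}[\int_{]r,t]}a_u\delta Y_{u-}\,d\mu_u\mid\mathcal{F}_r]$, whose integrand contains the \emph{positive} part of $\delta Y$ with an adversarial sign on $a_u$; the best closed estimate you can extract is $\delta Y_r^-\le \sqrt{c}\,E_{\tilde{\mathbb{P}}}[\int_{]r,t]}(\delta Y_{u-}^++\delta Y_{u-}^-)\,d\mu_u\mid\mathcal{F}_r]$, and the $\delta Y^+$ term is uncontrolled, so no Gr\"onwall inequality in $\delta Y^-$ (or $|\delta Y|$) results. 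This is exactly the ``excursions supported by the positive part'' phenomenon you flag in your final paragraph --- but flagging it is not resolving it. Second, $a$ is only predictable, so your $\mathfrak{E}(\tilde\nu)$ is a \emph{stochastic} factor; Lemmas \ref{lem:integratingfactorbound} and \ref{lem:groenwallinequality} are pathwise Stieltjes statements and cannot be pushed through the family of conditional expectations $E_{\tilde{\mathbb{P}}}[\,\cdot\mid\mathcal{F}_r]$ indexed by $r$.

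The paper closes this step by a different device, which removes the positive part before any estimate is made. Using existence of left limits, for small (stochastic) $\epsilon$ it splits into the alternative ``$\delta Y_r\ge 0$ on all of $[t-\epsilon,t]$'' or ``$\delta Y_r<0$ on all of $[t-\epsilon,t]$'' (call the latter event $A$); on $A$ every value is negative, so $|\delta Y_r|=-\delta Y_r$ and the hypothesis of (iv) yields the \emph{closed} inequality $w_r\le \int_{]r,t]}c_s\,w_{s-}\,d\mu_s$ for $w_r:=E_{\tilde{\mathbb{P}}}[\,|\delta Y_r|\mid\mathcal{F}_{t-\epsilon}]$ (after assuming $c_s\ge 1$ so that $\sqrt{c_s}\le c_s$). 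The contradiction then comes not from an integrating factor but from a contraction: $\epsilon$ is chosen so small that $\int_{]t-\epsilon,t]}c_s\,d\mu_s<1$ (possible since $c_s$ is bounded and $c_s\Delta\mu_s<1$, so the atom at $t$ alone contributes less than $1$), and a single Fubini iteration gives $\int_{]t-\epsilon,t]}c_r w_{r-}\,d\mu_r<\int_{]t-\epsilon,t]}c_s w_{s-}\,d\mu_s$ unless that integral vanishes, forcing $w\equiv 0$ and hence $A=\emptyset$. (Your worry about sign oscillation with $\delta Y_{t-}=0$ is legitimate, and it is precisely the case the paper's dichotomy treats rather brusquely; but your proposal neither proves such a dichotomy nor supplies a substitute for it.) As it stands, then, your argument establishes only $\delta Y_{t-}\ge 0$ and the easy case $\delta Y_{t-}>0$; the theorem itself remains unproved.
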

\begin{pf}
As we are in the scalar case, we can omit the $e_i$ from the statement
of the assumption. Hence, we wish to show that, given for all $r\in[s,T]$
\begin{eqnarray*}
&&Y_r -E_{\tilde{\mathbb{P}}}\biggl[\int
_{]r,T]}F(\omega, u, Y_{u-}, Z_u)\,d\mu_u\Big|\mathcal{F}_r\biggr] \\
&&\qquad \geq\bar Y_r-E_{\tilde{\mathbb{P}}}\biggl[\int
_{]r,T]}F(\omega, u, \bar Y_{u-}, Z_u)\,d\mu_u\Big|\mathcal{F}_r\biggr],
\end{eqnarray*}
we must have $Y_r\geq\bar Y_r$. For simplicity, let $\delta Y:= Y-\bar Y$.

It is clear from the problem and the recursivity of BSDE solutions
that we can replace $T$ with any stopping time $\tau\leq T$ such that
$\delta Y_\tau\geq0$. By applying Lem\-ma~\ref{lemsplitupinterval}, we
can also assume that $s$ is such that $\int_{]s,T]} c_u \,d\mu_u <1$, and
simply piece together the result for general $s$.

Suppose on some nonnull set $A\in\mathcal{F}$, $\delta Y_u<0$ for some
$u\in[s,T]$. As $\delta Y$ is adapted and right continuous, this
implies that
there are stopping times~$\sigma, \tau$ such that $\delta Y_u<0$ for all
$u\in[\sigma,\tau[$, and $s\leq\sigma<\tau$ on $A$. Without loss of generality,
let $\tau$ be the largest such upper bound. Then, as $\delta Y_T\geq0$ and
$\tau\leq T$, it follows that $\delta Y_\tau\geq0$. Replacing $T$ with
$\tau$
in the above inequality, we know that
\begin{eqnarray*}
&&E_{\tilde{\mathbb{P}}}\bigl[I_{r\in[\sigma,\tau[} |\delta Y_r|\bigr]\\
&&\qquad= E_{\tilde{\mathbb{P}}}\bigl[-I_{r\in[\sigma,\tau[} \delta Y_r\bigr]\\
&&\qquad\leq E_{\tilde{\mathbb{P}}}\biggl[-I_{r\in[\sigma,\tau[} \int_{]r,\tau
]} F^1(\omega, u, Y^1_{u-}, Z^1_u) - F^1(\omega, u, Y^2_{u-}, Z^1_u)
\,d\mu_u\biggr]\\
&&\qquad\leq E_{\tilde{\mathbb{P}}}\biggl[I_{r\in[\sigma,\tau[}\int_{]r,\tau
]}c_u|\delta Y_{u-}|\,d\mu_u\biggr]\\
&&\qquad\leq \int_{]r,T]}E_{\tilde{\mathbb{P}}}\bigl[I_{u\in[\sigma,\tau
[}|\delta Y_{u-}|\bigr]c_u\,d\mu_u.
\end{eqnarray*}
Taking a left limit in $r$, we see
\[
E_{\tilde{\mathbb{P}}}\bigl[I_{r\in[\sigma,\tau[} |\delta Y_{r-}|\bigr] \leq
\int_{[r,T]}E_{\tilde{\mathbb{P}}}\bigl[I_{u\in[\sigma,\tau[}|\delta
Y_{u-}|\bigr]c_u\,d\mu_u.
\]
By assumption, this quantity is strictly positive. Integration on $]t,T]$
and Fubini's theorem gives, for $t>s$,
\begin{eqnarray*}
\int_{]t,T]} E_{\tilde{\mathbb{P}}}\bigl[I_{r\in[\sigma,\tau[} |\delta
Y_{r-}|\bigr]c_r\,d\mu_r
&\leq& \int_{]t,T]}\biggl(\int_{[r,T]}E_{\tilde{\mathbb{P}}}\bigl[I_{u\in
[\sigma,\tau[}|\delta Y_{u-}|\bigr]c_u\,d\mu_u\biggr) c_r\,d\mu_r\\
&=& \int_{]t,T]}\biggl(\int_{]t,u]} c_r\,d\mu_r\biggr) E_{\tilde{\mathbb
{P}}}\bigl[I_{u\in[\sigma,\tau[}|\delta Y_{u-}|\bigr]c_u\,d\mu_u\\
&<& \int_{]t,T]} E_{\tilde{\mathbb{P}}}\bigl[I_{u\in[\sigma,\tau[}|\delta
Y_{u-}|\bigr]c_u\,d\mu_u,
\end{eqnarray*}
where the last line is due to our\vspace*{1pt} assumption that $\int_{]s,T]}c_t \,d\mu
_t <1$.
This contradicts our assumption that this quantity is strictly positive.
Therefore, $A$ is a null set, that is, $\delta Y_u \geq0$ for all $u\in[s,t]$.
\end{pf}
\begin{definition}
The comparison between $Y$ and $\bar Y$ will be
called \textit{strict} on $[s,T]$ if the conditions of Theorem \ref
{thmCompThm} hold, and, for any $A\in\mathcal{F}_s$ such that
$Y_s=\bar Y_s$ $\mathbb{P}$-a.s. on $A$, we have $Y_u = \bar Y_u$ on
$[s,T]\times A$, up to evanescence.
\end{definition}
\begin{lemma}
If the comparison is strict on $[s,T]$, then for any $A\in\mathcal
{F}_s$ such that $Y_s=\bar Y_s$ $\mathbb{P}$-a.s. on $A$, it follows that:
\begin{itemize}
\item$Q=\bar Q$ $\mathbb{P}$-a.s. on $A$;
\item$F(\omega, u, \bar Y_{u-}, \bar Z_u)= \bar F(\omega, u, \bar
Y_{u-}, \bar Z_u)$ $\mu\times\mathbb{P}$-a.s. on $[s,T]\times A$;
\item$Z\equiv\bar Z$ in $H^2_M$ on $[s,T]\times A$.
\end{itemize}
\end{lemma}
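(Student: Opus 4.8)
The plan is to use strictness to force $\delta Y:=Y-\bar Y$ to vanish identically on $A\times[s,T]$, and then to read the three assertions off the (necessarily trivial) canonical decomposition of the localized process $I_A\,\delta Y$. The terminal identity is immediate: by strictness $Y_u=\bar Y_u$ on $[s,T]\times A$ up to evanescence, so evaluating both BSDEs at $u=T$, where the stochastic integrals vanish, gives $Q=Y_T=\bar Y_T=\bar Q$ $\mathbb P$-a.s. on $A$, which is the first assertion. Since $\delta Y$ is c\`adl\`ag and vanishes on $A$ for every $u\in[s,T]$, its left limits vanish there too, so $\delta Y_{u-}=0$ on $A$; consequently, inside any driver evaluation restricted to $A$ we may replace $Y_{u-}$ by $\bar Y_{u-}$.

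The core of the argument is the decomposition step. Subtracting the two equations and using $I_A\,\delta Y_s=0$, for $t\in[s,T]$ one has
\[ I_A\,\delta Y_t = -\int_{]s,t]} I_A\,\delta F_u\,d\mu_u + \sum_i\int_{]s,t]} I_A\,\delta Z^i_u\,dM^i_u, \]
where $\delta F_u=F(\omega,u,\bar Y_{u-},Z_u)-\bar F(\omega,u,\bar Y_{u-},\bar Z_u)$. Because $A\in\mathcal F_s$, the factor $I_A$ is predictable on $]s,T]$, so $I_A\,\delta Z$ again lies in $H^2_M$ and the second term is a square-integrable martingale, while the first term is predictable and of finite variation. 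Thus $I_A\,\delta Y$ is a special semimartingale whose canonical decomposition is displayed above. By strictness its left-hand side is indistinguishable from zero, and since a predictable finite-variation local martingale started at $0$ is identically $0$, uniqueness of the decomposition forces both terms to vanish separately.

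From the finite-variation part I obtain $\int_{]s,t]}I_A\,\delta F_u\,d\mu_u\equiv 0$, hence $\delta F=0$ $d\mu\times d\mathbb P$-a.e. on $A$. From the martingale part, the isometry (\ref{eq:isometry}) yields
\[ E\Big[I_A\int_{]s,T]}\|\delta Z_u\|^2_{M_u}\,d\mu_u\Big] = E\Big[\Big\|\sum_i\int_{]s,T]}I_A\,\delta Z^i_u\,dM^i_u\Big\|^2\Big]=0, \]
which is exactly the third assertion that $\|Z_t-\bar Z_t\|_{M_t}=0$ on $A$. Finally, splitting $\delta F_u=\big[F(\omega,u,\bar Y_{u-},Z_u)-F(\omega,u,\bar Y_{u-},\bar Z_u)\big]+\delta_2 f_u$ and bounding the first bracket by the firm Lipschitz estimate,
\[ \|F(\omega,u,\bar Y_{u-},Z_u)-F(\omega,u,\bar Y_{u-},\bar Z_u)\|^2\le c\,\|\delta Z_u\|^2_{M_u}=0, \]
leaves $\delta_2 f=\delta F=0$ $d\mu\times d\mathbb P$-a.e. on $A$, which is the second assertion.

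The step I expect to require the most care is obtaining the second assertion away from the atoms of $\mu$. Working with $\|\delta Y\|^2$ and the expansion (\ref{eq:expanddeltaY}) as in Lemma \ref{lem:solutionbounds} controls only the jump term $\|\delta F\|^2(\Delta\mu)^2$, and would therefore give $\delta_2 f=0$ merely at the atoms of $\mu$; it is essential to argue linearly, on $\delta Y$ itself rather than on its square, so that the \emph{drift} $-\delta F\,d\mu$ is seen to vanish at $\mu$-almost every time. The remaining technical points are the legitimacy of the localization by $I_A$ (which rests on $A\in\mathcal F_s$, so that $I_A$ is predictable on $]s,T]$ and preserves both the martingale and the finite-variation structure) and the uniqueness of the special-semimartingale decomposition used to split off the two parts.
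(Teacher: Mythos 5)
Your proof is correct, but it follows a genuinely different route from the paper's. The paper recycles the machinery of the comparison theorem: it forms the process $S$ of (\ref{eq:Sdefn}), whose three summands are each nonnegative precisely because of assumptions (i)--(iii) of Theorem \ref{thm:CompThm} (the $\tilde{\mathbb{P}}_j$-martingale of $Q-\bar Q$, the nonincreasing integral of $F-\bar F$, and the supermartingale $\tilde X$), observes via (\ref{eq:SBSDE}) that strictness forces $S\equiv 0$ on $[s,T]\times A$, and reads off the first two bullets from the vanishing of each nonnegative term; the third bullet is then obtained by noting that $(\bar Y,\bar Z)$ solves the BSDE with data $(F,Q)$ on $A$ and invoking the uniqueness part of the existence theorem. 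You instead use only the strictness conclusion itself: subtracting the two BSDEs localized by $I_A$ (legitimate since $A\in\mathcal{F}_s$), you identify the zero process with a special semimartingale whose predictable finite-variation part is $-\int I_A\,\delta F\,d\mu$ (predictable because $\mu$ is deterministic) and whose martingale part is $\sum_i\int I_A\,\delta Z^i\,dM^i$; uniqueness of the canonical decomposition kills both, the isometry (\ref{eq:isometry}) converts the vanishing martingale part into the third bullet, and the Lipschitz bound in $z$ then strips the $F(\bar Y_{-},Z)-F(\bar Y_{-},\bar Z)$ term out of $\delta F$ to yield the second bullet. What each approach buys: yours is more elementary and self-contained --- it never touches the measures $\tilde{\mathbb{P}}_j$, assumptions (i)--(iii), or the BSDE uniqueness theorem, so it actually establishes the slightly stronger fact that \emph{any} two solutions agreeing on $[s,T]\times A$ must have identical data there; the paper's argument, by contrast, is shorter given what is already on the table, since $S$ and its nonnegativity were established in proving the comparison theorem, and it obtains the driver identity directly from sign considerations rather than through the $Z$-estimate. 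One point worth making explicit in your write-up is the integrability ensuring the finite-variation part is well defined (it follows since $Y\in S^2$, $Z\in H^2_M$, $F$ standard, and $\mu$ finite), and that predictability of $\int I_A\,\delta F\,d\mu$ --- which your decomposition argument genuinely needs --- rests on $\mu$ being deterministic.
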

\begin{pf}
We omit the $\omega$ and $t$ arguments of $F$ and $\bar F$ for clarity.
Let $\tilde X$ be as in~(\ref{eqXtildeDefn}), and let $S$ be the
process defined by
%
\begin{eqnarray}\label{eqSdefn}\quad
e_j^*S_r&:=&e_j^*E_{\tilde{\mathbb{P}}_i}[
Q-\bar Q|\mathcal{F}_r] \nonumber\\[-8pt]\\[-8pt]
&&{} + e_j^*E_{\tilde{\mathbb
{P}}_i}\biggl[\int_{]r,T]} [F(\bar Y_{u-}, \bar Z_{u}) - \bar
F(\bar Y_{u-}, \bar Z_{u})]\,d\mu_u\Big|\mathcal{F}_r\biggr] +
e_j^*\tilde X_r.
\nonumber
\end{eqnarray}
Then $e_j^*S$ is a $\tilde{\mathbb{P}}_j$-supermartingale, as the first
term is a $\tilde{\mathbb{P}}_j$-martingale, the second is
nonincreasing in $r$ by assumption (ii) of Theorem~\ref{thmCompThm}
and the third is a $\tilde{\mathbb{P}}_j$-supermartingale by assumption
(iii) of Theorem~\ref{thmCompThm}. Furthermore, each of these terms is
nonnegative.

Taking a $\tilde{\mathbb{P}}_j|\mathcal{F}_r$ conditional expectation
through~(\ref{eqBSDE}), we have that, for all $r\in[s,T]$,
%
\begin{equation}\label{eqSBSDE}
e_j^*(Y_r - \bar Y_r) = e_j^*S_r + E_{\tilde{\mathbb{P}}}\biggl[
\int_{]r,t]}e_j^*[F(Y_{u-}, Z_u) - F(\bar Y_{u-}, Z_u)]\,d\mu_u
\Big|\mathcal{F}_r\biggr].\hspace*{-35pt}
\end{equation}

If $Y_r=\bar Y_r$ on $[s,T]\times A$ up to evanescence, then it is
clear from~(\ref{eqSBSDE}) that $S_ r=0$ $\mathbb{P}$-a.s. on
$[s,T]\times A$. Hence, by nonnegativity, each of the terms on the
right-hand side of~(\ref{eqSdefn}) must be zero. The first two points
of the lemma immediately follow.

Consider the BSDE~(\ref{eqBSDE}) satisfied by $\bar Y$. As $F(\bar
Y_{u-}, Z_u)=\bar F(Y_{u-}, Z_u)$ $\mu\times\mathbb{P}$-a.s. on
$[s,T]\times A$ and $Q=\bar Q$ $\mathbb{P}$-a.s. on $A$, we know that
\[
\bar Y_r - \int_{]r,T]} \bar F(\bar Y_{u-}, \bar Z_u) \,d\mu_u+ \sum_i\int
_{]r,T]} \bar Z^i_{u}\,dM_u^i= \bar Q
\]
is $\mathbb{P}$-a.s. equal to
\[
\bar Y_r - \int_{]r,T]} F(\bar Y_{u-}, \bar Z_u) \,d\mu_u+ \sum_i\int
_{]r,T]} \bar Z^i_{u}\,dM_u^i= Q.
\]
Hence, in $A$, $(\bar Y, \bar Z)$ is a solution at time $r$ to the BSDE
defining $(Y, Z)$.

As the solution to this BSDE is unique, it follows that, on
$[s,T]\times A$, $\bar Z\equiv Z$ in~$H^2_{M_t}$.
\end{pf}
\begin{theorem}[(Strict comparison)] \label{thmStrictComp1}
Consider the scalar ($K=1$) case, where $F$ is balanced. Then the
comparison is strict on $[s,T]$ for all $s$.
\end{theorem}
\begin{pf}
Again, as $K=1$ we can omit $e_j$ from all equations, and we omit the
$\omega$ and $t$ arguments of $F$ and $\bar F$ for clarity. Let $S_r$
be as defined in~(\ref{eqSdefn}), and note that $S$ is a nonnegative
$\tilde{\mathbb{P}}$-supermartingale.

Taking a $\tilde{\mathbb{P}}|\mathcal{F}_s$ conditional expectation of
(\ref{eqSBSDE}) gives
%
\begin{eqnarray}\label{eqYSinequality}
E_{\tilde{\mathbb{P}}}[Y_r - \bar Y_r|\mathcal
{F}_s]
&=& E_{\tilde{\mathbb{P}}}\biggl[S_r + \int_{]s,t]}[F(Y_{u-}, Z_u)
- F(\bar Y_{u-}, Z_u)]\,d\mu_u\Big|\mathcal{F}_s\biggr]\nonumber\\
&&{} - E_{\tilde{\mathbb{P}}}\biggl[\int_{]s,r]}[F(Y_{u-}, Z_u)
- F(\bar Y_{u-}, Z_u)]\,d\mu_u\Big|\mathcal{F}_s\biggr]\nonumber\\
&\leq& S_s+E_{\tilde{\mathbb{P}}}\biggl[\int_{]s,t]}[F(Y_{u-}, Z_u)
- F(\bar Y_{u-},
Z_u)]\,d\mu_u\Big|\mathcal{F}_s\biggr]\nonumber\\[-8pt]\\[-8pt]
&&{} +\int_{]s,r]}E_{\tilde{\mathbb{P}}}[|F(Y_{u-}, Z_u)
- F(\bar Y_{u-}, Z_u)||\mathcal{F}_s]\,du\nonumber\\
&\leq& S_s+E_{\tilde{\mathbb{P}}}\biggl[\int_{]s,t]}[F(Y_{u-}, Z_u)
- F(\bar Y_{u-}, Z_u)]\,d\mu_u\Big|\mathcal{F}_s\biggr]\nonumber\\
&&{} +c \int_{]s,r]}E_{\tilde{\mathbb{P}}}[|Y_{u-}- \bar
Y_{u-}||\mathcal{F}_s]\,d\mu_u.
\nonumber
\end{eqnarray}

We know from~(\ref{eqSBSDE}) and the assumption $Y_s-\bar Y_s=0$ on
$A$ that
\[
I_AS_s+I_AE_{\tilde{\mathbb{P}}}\biggl[\int_{]s,t]}[F(Y_{u-}, Z_u)
- F(\bar Y_{u-}, Z_u)]\,d\mu_u\Big|\mathcal{F}_s\biggr]=I_A(Y_s-\bar
Y_s) = 0,
\]
and so, as $Y-\bar Y$ is nonnegative by Theorem~\ref{thmCompThm},
premultiplication of~(\ref{eqYSinequality}) by $I_A$ and then taking
an expectation gives
\[
E_{\tilde{\mathbb{P}}}[I_A(Y_r - \bar Y_r)]\leq c \int_{]s,r]}E_{\tilde
{\mathbb{P}}}[I_A(Y_{u-}- \bar Y_{u-})]\,d\mu_u.
\]
As all quantities are nonnegative, taking a limit from below yields
\[
E_{\tilde{\mathbb{P}}}[I_A(Y_{r-} - \bar Y_{r-})]\leq c \int
_{]s,r]}E_{\tilde{\mathbb{P}}}[I_A(Y_{u-}- \bar Y_{u-})]\,d\mu_u,
\]
and an application of (the forward version of) Gr\"onwall's lemma implies
\[
E_{\tilde{\mathbb{P}}}[I_A(Y_r - \bar Y_r)]\leq0.
\]
By nonnegativity, it follows that $Y_r=\bar Y_r$, $\tilde{\mathbb
{P}}$-a.s. on $A$. Again, as $Y-\bar Y$ is c\`adl\`ag, this shows that
$Y=\bar Y$ on $[s,t]\times A$, up to evanescence.
\end{pf}
\begin{corollary} \label{corstrictvectcomp}
$\!\!\!$If the $i$th component of $F(\omega, t, y, z)$ depends only~on~the
$i$th component of $y$ (as well as on $\omega, t$ and $z$), then the
comparison is strict.
\end{corollary}
\begin{pf}
As the $i$th component of $F$ depends only on the $i$th component
of~$y$, we can repeat the construction of Theorem~\ref{thmStrictComp1} in
each component. The result follows.
\end{pf}
\begin{remark}
In the scalar case, with a simple Brownian filtration ($M^1=W$, $M^i=0$
for $i\geq2$) and $d\mu=dt$, we can use Girsanov's transformation to
construct the measure required for assumption (iii) of Theorem \ref
{thmCompThm}. We write
\[
\Lambda_t := 1+ \int_{]0,t]}\Lambda_{u-}\frac{F(\omega, u, \bar Y_{u-},
Z_u)- F(\omega, u, \bar Y_{u-}, \bar Z_u)}{Z_u-\bar Z_u} \,dW_u,
\]
then $d\tilde{\mathbb{P}}/d\mathbb{P} = \Lambda_T$. It is then easy to
verify that $X$ is a martingale. In this case, using Theorem \ref
{thmScalarsimple} we can see that any Lipschitz continuous $F$ is balanced.
\end{remark}

\section{Nonlinear expectations}
We are now in a position to explicitly construct nonlinear expectations
in a general probability space. We shall not here consider the more
general theory of nonlinear evaluations. An approach without these\vadjust{\goodbreak}
restrictions can be seen in~\cite{Cohen2009}. These operators,
discussed in~\cite{Peng2005}, are closely related to the theory of
dynamic risk measures, as in~\cite{Barrieu2004,Rosazza2006,Artzner2007} and others, as each concave nonlinear expectation
$\mathcal{E}(\cdot|\mathcal{F}_t)$ corresponds to a dynamic convex risk
measure through the relationship
\[
\rho_t(Q) = -\mathcal{E}(Q|\mathcal{F}_t).
\]
A further discussion of this relationship can be found in~\cite{Rosazza2006}.
\begin{definition}
A family of operators
\[
\mathcal{E}(\cdot|\mathcal{F}_t)\dvtx L^2(\mathcal{F}_T)\rightarrow
L^2(\mathcal{F}_t),\qquad 0\leq t \leq T,
\]
is called an $\mathcal{F}_t$-consistent \textit{nonlinear expectation}
if $\mathcal{E}(\cdot|\mathcal{F}_t)$ satisfies the following properties:

\begin{longlist}[(1)]
\item[(1)] If $Q\geq\bar Q$ $\mathbb{P}$-a.s. componentwise
\[
\mathcal{E}(Q|\mathcal{F}_t) \geq\mathcal{E}(\bar
Q|\mathcal{F}_t),\qquad
\mathbb{P}\mbox{-a.s. componentwise}
\]
with equality iff $Q = \bar Q$ $\mathbb{P}$-a.s.
\item[(2)] For $Q\in L^2(\mathcal{F}_t)$, $\mathcal{E}(Q|\mathcal{F}_t) = Q$
$\mathbb{P}$-a.s.
\item[(3)] For any $s\leq t$,
\[
\mathcal{E}(\mathcal{E}(Q|\mathcal{F}_t)|\mathcal{F}_s) = \mathcal
{E}(Q|\mathcal{F}_s), \qquad\mathbb{P}\mbox{-a.s.}
\]
\item[(4)] For any $A\in\mathcal{F}_t$,
\[
I_A\mathcal{E}(Q|\mathcal{F}_t) =
\mathcal{E}(I_AQ|\mathcal{F}_t),\qquad
\mathbb{P}\mbox{-a.s.}
\]
\end{longlist}
\end{definition}
\begin{theorem}
Let $F$ be a balanced driver which does not depend on~$Y$ (i.e.,
$c_t\equiv0$) and satisfies $F(\omega, t, y, 0) =0$ $\mu\times\mathbb
{P}$-a.s. Then the operator defined by
\[
\mathcal{E}(Q|\mathcal{F}_t) = Y_t,
\]
where $Y$ is the solution to a BSDE~(\ref{eqBSDE}) with driver $F$, is
a nonlinear expectation.
\end{theorem}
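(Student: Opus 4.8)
The plan is to verify directly the four defining properties of a nonlinear expectation, drawing throughout on uniqueness of solutions (Theorem \ref{thm:BSDEExist3}), on the comparison and strict comparison results (Theorem \ref{thm:CompThm} and Corollary \ref{cor:strictvectcomp}), and on the two structural hypotheses $c_t\equiv 0$ (so $F$ does not depend on $Y$) and $F(\omega,t,y,0)=0$. The single recurring tool is a \emph{flow} (restriction) property, which I would establish first since everything else rests on it: if $(Y,Z)$ solves the BSDE with terminal value $Q$, then subtracting the defining equation at time $t$ from that at time $r$ (for $s\leq r\leq t$) gives $Y_r = Y_t + \int_{]r,t]}F(\omega,u,Y_{u-},Z_u)\,d\mu_u - \sum_i\int_{]r,t]}Z^i_u\,dM^i_u$, so the restriction of $(Y,Z)$ to $[s,t]$ solves the BSDE on $[s,t]$ with terminal value $Y_t$. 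As Theorem \ref{thm:BSDEExist3} applies equally on $[s,t]$, this restricted solution is the \emph{unique} one with that terminal value.

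For property (1), I would invoke Theorem \ref{thm:CompThm} with $\bar F=F$: hypothesis (i) is the assumed ordering $Q\geq\bar Q$, (ii) holds with equality since the drivers coincide, and (iii)--(iv) hold because $F$ is balanced; hence $Y\geq\bar Y$, i.e. $\mathcal{E}(Q|\mathcal{F}_t)\geq\mathcal{E}(\bar Q|\mathcal{F}_t)$. The forward implication of the equality clause is immediate from uniqueness. For the converse, since $F$ does not depend on $Y$ its $i$th component depends (trivially) only on the $i$th component of $y$, so Corollary \ref{cor:strictvectcomp} yields a strict comparison on $[t,T]$: if $\mathcal{E}(Q|\mathcal{F}_t)=\mathcal{E}(\bar Q|\mathcal{F}_t)$, i.e. $Y_t=\bar Y_t$ on all of $\Omega\in\mathcal{F}_t$, then $Y=\bar Y$ on $[t,T]$, whence $Q=Y_T=\bar Y_T=\bar Q$.

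Properties (2) and (4) I would treat together, as both follow from uniqueness once the structural hypotheses are used. For (2), if $Q\in L^2(\mathcal{F}_t)$ then the constant pair $(Y_u,Z_u)=(Q,0)$ for $u\in[t,T]$ satisfies the BSDE on $[t,T]$ with terminal value $Q$, precisely because $F(\omega,u,Q,0)=0$ annihilates the $d\mu$ term while the stochastic integral vanishes; uniqueness forces $Y_t=Q$. For (4), fix $A\in\mathcal{F}_t$; since $I_A$ is $\mathcal{F}_u$-measurable for $u\geq t$ it may be drawn inside the Stieltjes integral (pathwise) and each stochastic integral, and using $c_t\equiv 0$ together with $F(\omega,u,\cdot,0)=0$ one checks that $I_A F(\omega,u,Y_{u-},Z_u)=F(\omega,u,I_AY_{u-},I_A Z_u)$ (the two sides agree on $A$ by construction and both vanish on $A^c$). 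Hence $(I_AY,I_AZ)$ solves the BSDE on $[t,T]$ with terminal value $I_AQ$, and uniqueness gives $I_A\mathcal{E}(Q|\mathcal{F}_t)=\mathcal{E}(I_AQ|\mathcal{F}_t)$.

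The main effort is property (3), time consistency, and here the flow property does the work. Fix $s\leq t$ and let $(Y,Z)$ solve with terminal value $Q$, so $\mathcal{E}(Q|\mathcal{F}_t)=Y_t$. To compute $\mathcal{E}(Y_t|\mathcal{F}_s)$ I would solve a fresh BSDE with terminal value $Y_t\in L^2(\mathcal{F}_t)\subseteq L^2(\mathcal{F}_T)$; by the argument for property (2) its solution is constant, equal to $Y_t$ with vanishing $Z$, on $[t,T]$, and therefore on $[s,t]$ it solves the BSDE with terminal value $Y_t$. By the flow property this restricted equation has the unique solution $(Y,Z)|_{[s,t]}$, so its value at $s$ is $Y_s=\mathcal{E}(Q|\mathcal{F}_s)$, giving $\mathcal{E}(\mathcal{E}(Q|\mathcal{F}_t)|\mathcal{F}_s)=\mathcal{E}(Q|\mathcal{F}_s)$. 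I expect the only genuine subtlety to be bookkeeping: ensuring that an operator built from a BSDE with \emph{fixed} terminal time $T$ is interpreted consistently on subintervals, which is exactly what uniqueness (Theorem \ref{thm:BSDEExist3}) and the hypothesis $F(\omega,t,\cdot,0)=0$ guarantee.
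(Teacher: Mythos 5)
Your proposal is correct and takes essentially the same route as the paper's own proof: the comparison and strict comparison results (Theorem \ref{thm:CompThm}, Corollary \ref{cor:strictvectcomp}) for property (1), the explicit constant solution $(Q,0)$ plus uniqueness for property (2), the restriction (flow) argument combined with property (2) for time consistency, and the identity $I_AF(\omega,t,y,z)=F(\omega,t,I_Ay,I_Az)$ plus uniqueness for property (4). Your write-up is merely more explicit than the paper's --- in particular in isolating the flow property and in checking that both sides of the indicator identity vanish on $A^c$ via $F(\omega,t,\cdot,0)=0$ --- but the underlying argument is the same.
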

\begin{pf}
(1) As $F$ is balanced, this result follows directly from the
comparison theorem (Theorem~\ref{thmCompThm}). As $F$ does not depend
on $Y$, the strict comparison will also hold, by Corollary \ref
{corstrictvectcomp}.

(2) Consider the BSDE~(\ref{eqBSDE}) on $[t,T]$
\[
Y_s -\int_{]s,T]} F(\omega, u, Y_{u-}, Z_u)\,d\mu_u +\sum_i\int_{]s,T]}
Z_u^i \,dM^i_u = Q.
\]
This has a solution $Y_s=Q$, $Z_s=0$. As $Q\in L^2(\mathcal{F}_t)$,
this solution is adapted and, by Theorem~\ref{thmBSDEExist}, unique.
Therefore $\mathcal{E}(Q|\mathcal{F}_t)=Y_t=Q$ as desired.

(3) By definition the BSDE with terminal condition $Q$ at time $T$
has solution $Y_t$ at time $t$. Simple manipulation of the BSDE (\ref
{eqBSDE}) at time $s$ shows that $Y_s$ is also the time $s$ solution
to the BSDE with terminal condition $Y_t$ at time $t$. Hence, by
property 2, $Y_s$ solves both the BSDE with terminal condition
$Y_t=\mathcal{E}(Q|\mathcal{F}_t)$ and the BSDE with terminal condition
$Q$.\vadjust{\goodbreak}

(4) Consider the BSDE with driver $F$ and terminal condition $Q$.
Multiplying by~$I_A$, as $I_AF(\omega, t, y, z) = F(\omega, t, I_Ay,
I_A z)$, we see that $(I_A Y, I_A Z)$ is the solution to the BSDE with
driver $F$ and terminal condition $I_AQ$, as desired.
\end{pf}
\begin{remark}
It is known in discrete time~\cite{Cohen2008c}, and under some
conditions in continuous time~\cite{Coquet2002}, that BSDEs describe
all nonlinear expectations, subject to some boundedness conditions. It
is likely that a similar result will hold in this setting. However,
obtaining such a result is beyond the scope of this paper.
\end{remark}

\section{Conclusions}
We have constructed BSDEs in a general filtered probability space,
using only basic properties of the filtration. We have presented
conditions for the existence of unique solutions to these equations,
and seen how these are related to the conditions in both the classical
setting, and the discrete time setting. We have given a comparison
theorem for these~solutions, which allows the construction of nonlinear
\mbox{expectations in these~spaces.}

These results are significantly more general than those previously
available, as they make very few assumptions on the underlying
probability space. A consequence of this is that a possibly
infinite-dimensional martingale representation theorem is required. In
full generality, they also make no assumptions regarding the
relationship of the integrator of the driver and the quadratic
variations of the martingale terms. At the same time, this general
setting provides an approach unifying the theory of BSDEs in discrete
and continuous time.\vspace*{-3pt}

\section*{Acknowledgment}
Robert Elliott wishes to thank the Australian Research Council for support.\vspace*{-3pt}



%
\printaddresses

\end{document}